\newenvironment{proof}{{\bf Proof}:\ }%
   {~\ \hfill $\Box$\vspace{0,5cm}}
\newenvironment{ack}{\vskip5mm{\bf Acknowledgements:}}%
\newtheorem{theorem}{Theorem}[section]
\newtheorem{lemma}[theorem]{Lemma}
\newtheorem{proposition}[theorem]{Proposition}
\newtheorem{coro}[theorem]{Corollary}
\newtheorem{fact}{Fact}[section]
\numberwithin{equation}{section}
\begin{document}
\title{Partition of graphs with maximum degree ratio}

\author{V. Bouquet\footnotemark[1], F. Delbot\footnotemark[2],
C.\ Picouleau\footnotemark[1]
}
\date{\today}

\def\thefootnote{\fnsymbol{footnote}}

\footnotetext[1]{ \noindent
Conservatoire National des Arts et M\'etiers, CEDRIC laboratory, Paris (France). Email: {\tt
valentin.bouquet@lecnam.fr,christophe.picouleau@cnam.fr}}
\footnotetext[2]{ \noindent
LIP6 laboratory, Paris (France). Email: {\tt
Francois.Delbot@lip6.fr}
}
\maketitle

\begin{abstract}
Given a graph $G$ and a non trivial partition $(V_1,V_2)$ of its vertex-set, the satisfaction of a vertex $v\in V_i$ is the ratio between the size of it's closed neighborhood in $V_i$ and the size of its closed neighborhood in $G$. The worst ratio over all the vertices defines the quality of the partition. We define $q(G)$ the degree ratio of a graph as the maximum of the worst ratio over all the non trivial partitions. We give bounds and exact values of $q(G)$ for some classes of graphs. We also show some complexity results for the associated optimization or decision problems.

 \vspace{0.2cm}
\noindent{\textbf{Keywords}\/}: vertex-partition, edge-cut, regular graph, bipartite graph, degree ratio, NP-complete.
\end{abstract}

\section{Introduction}\label{intro}
The problems of partitioning a graph under various constraints have been intensively studied. A natural problem  consists  to find a partition such that each element of the partition  maximizes the number of internal links. This problem is closely related to the problem of partitioning a graph into communities. One could think of partitioning the graph into cliques or subsets such that most of the vertices have more neighbors in their part than outside. We refer to Pontoizeau PhD. thesis \cite{Pontoizeau} for an overview of community detection in graphs.

On the contrary, there is the concept of external partition, that is splitting a graph in two parts such that a vertex has at least half of its neighbors in the other part. It is known that every graph has an external partition \cite{SatGrasurvey}. Following, we have the problem of finding an internal partition, that is splitting a graph in two parts such that a vertex has at least half of its neighbors in his own part. Clearly, not all graphs have an internal partition. One could just look at an odd clique.

These two concepts have appeared under different names. The external partition is also referred to co-satisfactory partition, unfriendly partition or offensive 2-alliance. The internal partition is also named satisfactory partition, friendly partition or defensive 2-alliance. Note that the problem of finding $k$-partitions under the previous constraints is referred to as offensive $k$-alliance and defensive $k$-alliance. For a complete overview of the different approach we refer to the work of Bazgan et al. \cite{SatGrasurvey}.

Morris \cite{Morris} took a peak at the more general problem. Given a parameter $q,\, 0<q<1,$  he partitioned the graph in two parts $A$ and $B$ such that every $v\in A$ (resp. $u\in B$) has at least $q\times d(v)$ of its neighbors in $A$ (resp. at least $(1-q)\times d(u)$ neighbors in $B$). He refers to such set as $q/(1-q)$-cohesive and Ban et al. \cite{Ban} refers to it as $q$-internal partition and $q$-external partition. \\

We define and motivate the partitioning  problem we will study in this article.
A community network can be modeled by a graph where each vertex correspond to a member and edges correspond to friendship. In a partition of the network the ratio between the number of friends of a member situated in his side of the partition over the total number of his friends (a member being friend with himself) is a measure of the satisfaction of the member. A larger part of the members are satisfied when this ratio is big. The worst ratio over all the members of the network is a measure of the quality of the partition. In the maximum degree ratio problem the objective is to find a partition that maximize the worst ratio over all the non trivial partitions of the network. Note that for the maximum degree ratio problem we are concerned with the closed neighborhoods of the vertices in contrary to the majority of the problems mentioned above.  \\

The paper is organized as follows. On the next section we give the main notations and definitions of graph theory we use throughout the paper, we formalize the maximum degree ratio problem and its decision version, and we give some results of graph partitioning  that will be used in some of our proofs. In Section \ref{bounds} we give lower and upper bounds on the degree ratio. We also show exact formulas for some classes of graphs including trees. Section \ref{regul} is dedicated to regular graphs. We give exact values for the degree ratio and show some NP-completeness results. In Section \ref{prod} we are interested in the cartesian product of two graphs. Lower bounds for the degree ratio and NP-completeness results will be proved. In Section \ref{conc} we conclude and give some open questions.

\section{Prelimiraries}\label{pref}
All graphs considered in this paper are finite, undirected, simple and with at least two vertices. Given a graph $G=(V(G),E(G))$, with vertex-set $V(G)$ and edge-set $E(G)$, the degree of a vertex $x\in V(G)$, that is the number of its neighbors, is denoted by $d_G(x)$ or simply $d(x)$ when the context is not ambiguous. The set of neighbors of a vertex $x$ is called its {\it open neighborhood}, or simply its {\it neighborhood}, it is denoted by $N(x)$ and its size is $d(x)$. The neighborhood of $x$ with $x$, $N(x)\cup\{x\}$, is its {\it closed neighborhood}, and its size is $d_G[x]=d_G(x)+1$ or simply $d[x]$. The minimum degree and the maximum degree of $G$ are denoted by $\delta(G),\Delta(G)$, respectively.   Given $X\subseteq V(G)$, the subgraph  induced by $X$ is denoted by $G[X]$, i.e. \ $V(G[X])=X,E(G[X])=\{xy\in E(G)\vert x,y\in X\}$. Given $S\subset V(G)$ we denote by $G-S$ the induced subgraph $G[V(G)\setminus S]$. When $S$ is a singleton $\{v\}$ then we denote by $G-v$ the subgraph $G[V(G)\setminus \{v\}]$.
The complete graph with $n$ vertices is $K_n$, also called a {\it clique}. The chordless cycle on $n$ vertices is $C_n$. The graph $C_3=K_3$ is a {\it triangle}. The {\it bipartite clique} with $m$ vertices in one side and $n$ vertices in the opposite side is denoted $K_{m,n}$. The {\it claw} is $K_{1,3}$. The {\it diamond} is the four clique without an edge, i.e. $K_4-e$. The graph $K_4-e+v$ is obtained from the diamond by adding a vertex $v$ adjacent to the two vertices of degree three in $K_4-e$. Let us define a $k$-triangle by $T_k,\,k\ge 1,$ the following graph. $V(T_k)=\{s,t,v_1,\ldots,v_k\}$ and $E(T_k)=\{st\}\cup\{v_is,v_it : 1\le i\le k\}$. Note that $T_1=C_3$. For a fixed set $\{H_1,\ldots,H_p\}$ of graphs, $G$ is {\it $(H_1,\ldots,H_p)$-free} if $G$ has no induced subgraph isomorphic to a graph in $\{H_1,\ldots,H_p\}$; if $p=1$ we may write $H_1$-free instead of $(H_1)$-free. The graph $\overline G$ denotes the {\it complement} of $G$.
Other  notations or definitions that are not given here can be found in \cite{Bondy}.

Given $G=(V(G),E(G))$, $(V_1,V_2)$ is a partition of $V(G)$ if $V_1\cap V_2=\emptyset,V_1\cup V_2=V(G)$ and $V_i\ne\emptyset, i=1,2$. We denote by ${\cal P}_G$ the set of all partitions of $G$. Given a partition $(V_1,V_2)$ and a vertex $v\in V_i,i=1,2$ let us denote $q^i_G(v)={d_{G[V_i]}[v]\over d_G[v]}$ its {\it degree ratio}. Given $G$, its {\it optimal degree ratio} is denoted by $q(G)=\max_{(V_1,V_2)\in {\cal P}}\{\min_{v\in V_i}\{q^i_G(v)\},i=1,2\}$. A partition $(V_1,V_2)$ is {\it optimal} when $\min_{v\in V(G)} \{q^i_G(v)\}=q(G)$. When $G=(V(G),E(G))$ is connected, a partition $(V_1,V_2)$ induces a {\it cut} of $G$, that is $C\in E(G)$ the subset of edges with exactly one endpoint in $V_i,\, i=1,2$, and $G'=(V(G),E(G)\setminus C)$ is disconnected. When $M\subseteq E$ is a cut and no edges in $M$ share a vertex then $M$ is called a {\it matching-cut}.\\
For the notations defined above the subscripts can be dropped when the context is unambiguous.\\

The decision problem associated with the optimal degree ratio is defined as:
\begin{center}
\begin{boxedminipage}{.99\textwidth}
\textsc{\sc Degree Ratio Partition} \\[2pt]
\begin{tabular}{ r p{0.8\textwidth}}
\textit{~~~~Instance:} &a graph $G$ and a positive rational   $q,1 \geq q> 0$.\\
\textit{Question:} &is $q(G)\geq q$ ?
\end{tabular}
\end{boxedminipage}
\end{center}

Note that Degree Ratio Partition is clearly in NP.\\

As ingredients of some of our proofs we will use the following results.
In \cite{Stiebitz} M. Stiebitz proves the following.
\begin{theorem}\label{Stiebitz}
Let $G$ be a graph and $f_1,f_2:V(G)\rightarrow N$ two functions. Assume that $d_G(x)\ge f_1(x)+f_2(x)+1$ for every vertex $x\in V(G)$. Then there is a partition $(V_1,V_2)$ of $G$ such that:

\begin{enumerate}
\item $d_{G[V_1]}(x)\ge f_1(x)$ for every vertex $x\in V_1$, and
\item $d_{G[V_2]}(x)\ge f_2(x)$ for every vertex $x\in V_2$.
\end{enumerate}
\end{theorem}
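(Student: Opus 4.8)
The plan is to prove the theorem by the standard extremal‑partition (vertex‑relocation) method: among all the ordered partitions of $V(G)$ pick one that maximizes a cleverly chosen weight, and then argue that a violated degree constraint would let us strictly increase that weight by moving a single vertex across the partition.

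Concretely, for an ordered partition $(V_1,V_2)$ of $V(G)$ (allowing, for the moment, one of the two parts to be empty, exactly as in Stiebitz's original formulation) I would work with
\[
W(V_1,V_2)\;=\;|E(G[V_1])|+|E(G[V_2])|+\sum_{v\in V_1}f_2(v)+\sum_{v\in V_2}f_1(v),
\]
and select $(V_1,V_2)$ so that $W$ is maximum. I claim this partition satisfies both conclusions. Suppose it does not; by the symmetry of the two sides there is a vertex $v\in V_1$ with $d_{G[V_1]}(v)\le f_1(v)-1$. Relocate $v$, i.e.\ pass to $(V_1\setminus\{v\},\,V_2\cup\{v\})$: the first part loses $d_{G[V_1]}(v)$ internal edges, the second part gains $d_{G[V_2]}(v)=d_G(v)-d_{G[V_1]}(v)$ internal edges, the first sum loses its term $f_2(v)$, and the second sum gains a new term $f_1(v)$. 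Hence the weight changes by
\[
\Delta W\;=\;d_G(v)-2\,d_{G[V_1]}(v)+f_1(v)-f_2(v).
\]
Plugging in $d_{G[V_1]}(v)\le f_1(v)-1$ and then the hypothesis $d_G(v)\ge f_1(v)+f_2(v)+1$ shows $\Delta W\ge d_G(v)-2\big(f_1(v)-1\big)+f_1(v)-f_2(v)=d_G(v)-f_1(v)-f_2(v)+2\ge 3>0$, contradicting the maximality of $W$. Thus no such $v$ exists, which gives conclusion~(1); conclusion~(2) is obtained by the same computation with the two sides interchanged.

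The only point that requires extra care is making the partition non‑trivial (both parts non‑empty), since the weight maximization a priori permits an empty part — this is already so in the theorem exactly as Stiebitz states it. When non‑triviality is actually needed I would secure it separately: if some vertex $u$ has $f_1(u)=0$ then $(\{u\},V(G)\setminus\{u\})$ already works, because every $w\ne u$ satisfies $d_{G[V(G)\setminus\{u\}]}(w)\ge d_G(w)-1\ge f_1(w)+f_2(w)\ge f_2(w)$ (and symmetrically if some $f_2(u)=0$); in the remaining case $f_1,f_2\ge 1$ everywhere, so $\delta(G)\ge 3$, and one reruns the weighting argument, which as in Stiebitz's proof can be steered to keep both parts non‑empty. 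I do not expect any real difficulty here: once the weight $W$ above is written down the whole argument is a one‑line estimate, so the actual work is the discovery — in particular realizing that one must attach $f_2$ to the vertices of $V_1$ and $f_1$ to the vertices of $V_2$ (the ``cross'' assignment), which is exactly what makes the edge terms dominate when a deficient vertex is moved.
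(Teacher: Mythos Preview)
The paper does not give its own proof of this theorem: it is quoted from Stiebitz~\cite{Stiebitz} and used only as a black box in later arguments, so there is nothing in the paper to compare your proposal against.

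As for the proposal itself, the weight--relocation argument with the ``cross'' weight $W$ is a known route to the result, and your $\Delta W\ge 3$ computation is correct. The soft spot is exactly where you flag it: non-triviality of the extremal partition. Your handling of the case where some $f_i(u)=0$ is fine, but in the remaining case $f_1,f_2\ge 1$ you defer to ``rerun the weighting argument \ldots\ can be steered to keep both parts non-empty'' without saying how. Maximizing $W$ over \emph{all} ordered pairs may well be attained at $(V(G),\emptyset)$ or $(\emptyset,V(G))$; and if you instead maximize $W$ only over non-trivial partitions, the relocation step can try to empty a side (when the deficient vertex is alone there), so the contradiction does not close. Stiebitz's original proof sidesteps this by working with disjoint but not necessarily spanning pairs $(A,B)$ that are respectively $f_1$- and $f_2$-feasible, showing such a pair with both parts non-empty exists, and then absorbing the leftover vertices. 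Either transplanting that device, or giving a direct argument that the relocation process cannot be trapped among singleton partitions, is what your sketch still owes.
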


This result is strengthened by Jianfeng Hou et al. in the case of $(K_4-e+v)$-free graphs and $(K_3,C_8,K_{2,3})$-free graphs \cite{Hou}. Jie Ma et al. dealt with the case of $(C_4,K_4,diamond)$-free graphs \cite{JieMa}.

\begin{theorem}\label{Hou}\cite{Hou}
Let $G$ be a $(K_4-e+v)$-free graph and $f_1,f_2:V(G)\rightarrow N\setminus\{0\}$ two functions. Assume that $d_G(x)\ge f_1(x)+f_2(x)$ for every vertex $x\in V(G)$. Then there is a partition $(V_1,V_2)$ of $G$ such that:

\begin{enumerate}
\item $d_{G[V_1]}(x)\ge f_1(x)$ for every vertex $x\in V_1$, and
\item $d_{G[V_2]}(x)\ge f_2(x)$ for every vertex $x\in V_2$.
\end{enumerate}
\end{theorem}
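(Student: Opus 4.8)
The plan is to adapt Stiebitz's local-switching argument, using the excluded induced subgraph to close the one gap that the loss of the ``$+1$'' opens up. I would argue by contradiction: among all counterexamples $(G,f_1,f_2)$ pick one with $|V(G)|+|E(G)|$ minimum, and among those one with $\sum_{x\in V(G)}(f_1(x)+f_2(x))$ maximum. Two reductions come first. If $G$ is disconnected, let $V_1=V(C)$ for a component $C$ and $V_2=V(G)\setminus V(C)$; this is a valid partition since then $d_{G[V_i]}(x)=d_G(x)\ge f_1(x)+f_2(x)>f_i(x)$ for every $x\in V_i$. So $G$ is connected. If some $x$ had $d_G(x)>f_1(x)+f_2(x)$, then raising $f_1(x)$ by one keeps all hypotheses (in particular $d_G(x)\ge f_1(x)+1+f_2(x)$) and still yields a counterexample, because any valid partition for the new functions is valid for the old ones; but it increases $\sum(f_1+f_2)$, a contradiction. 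Hence $d_G(x)=f_1(x)+f_2(x)$ for every $x$, and in particular $\delta(G)\ge 2$. This is exactly the regime in which Theorem \ref{Stiebitz} gives nothing, which is why the excluded subgraph must do some work.

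Next I bring in a weight function. For a partition $(V_1,V_2)$ with both parts non-empty set $\Phi(V_1,V_2)=\sum_{x\in V_1}f_1(x)+\sum_{x\in V_2}f_2(x)+e(V_1,V_2)$, where $e(V_1,V_2)$ is the number of edges of $G$ with one endpoint in each part, and choose $(V_1,V_2)$ minimizing $\Phi$. If $v\in V_1$ is \emph{unsatisfied}, i.e.\ $a:=|N_G(v)\cap V_1|\le f_1(v)-1$, then moving $v$ to $V_2$ changes $\Phi$ by $-f_1(v)+f_2(v)+a-(d_G(v)-a)=2a-2f_1(v)\le -2$, using $d_G(v)=f_1(v)+f_2(v)$; the symmetric computation applies to an unsatisfied vertex of $V_2$. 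By minimality of $\Phi$ such a switch must be illegal, which can only mean it would empty a part; hence every unsatisfied vertex is the sole element of its side. Since $\delta(G)\ge 2$ forbids $|V(G)|=2$, both sides cannot be singletons. Therefore either the chosen partition is valid — contradicting that $(G,f_1,f_2)$ is a counterexample — or, after relabelling, $V_1=\{v\}$ with $v$ unsatisfied and every vertex of $V_2=V(G)-v$ satisfied in $G-v$.

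The remaining case is the substantive one, and where I expect the real work. To repair the partition, $v$ needs at least $f_1(v)\ge 1$ neighbours on its own side, so I would try to move a set $S\subseteq N_G(v)$ across the cut. A short computation, again using $d_G=f_1+f_2$, gives $\Phi(\{v\}\cup S,\,V(G)-v-S)-\Phi(\{v\},V(G)-v)=2\big(\sum_{x\in S}(f_1(x)-1)-e(G[S])\big)$, so minimality of $\Phi$ forces $e(G[S])\le\sum_{x\in S}(f_1(x)-1)$ for every $S\subseteq N_G(v)$ with $S\ne V(G)-v$; on the other hand, for $\{v\}\cup S$ to be a valid side each $x\in S$ must keep at least $f_1(x)-1$ neighbours inside $S$, whence $2e(G[S])\ge\sum_{x\in S}(f_1(x)-1)$, with the extremal case pinning $G[S]$ down almost completely. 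Playing these two opposed inequalities against each other, and against the constraint that no vertex of $V(G)-v-S$ may end up with more than $f_1$ of its neighbours in $\{v\}\cup S$, should reduce the adjacencies among $N_G(v)$ to a very short list of local configurations; in each, either a legal enlargement of $V_1$ exists (contradicting that $(G,f_1,f_2)$ is a counterexample) or the configuration, together with $v$ and one further vertex, induces a $K_4-e+v$ (contradicting the hypothesis). Carrying out this final case analysis — and checking that the degenerate possibility $N_G(v)=V(G)-v$ is also covered — is the delicate step; everything preceding it is routine bookkeeping. This is, in outline, the route followed in \cite{Hou}.
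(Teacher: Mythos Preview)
The paper does not prove Theorem~\ref{Hou}: it is quoted verbatim from \cite{Hou} and used as a black box, so there is no in-paper argument to compare yours against.

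As for the proposal itself, your preliminary reductions (to connected $G$ with $d_G(x)=f_1(x)+f_2(x)$ everywhere) and the weight-function computation are correct, and they do force the $\Phi$-minimal bad partition to have a singleton side. Where the outline stops being a proof is exactly where you say it does: the two inequalities $e(G[S])\le\sum_{x\in S}(f_1(x)-1)$ (from $\Phi$-minimality) and $2e(G[S])\ge\sum_{x\in S}(f_1(x)-1)$ (needed for validity on the $\{v\}\cup S$ side) are not opposed in any useful way --- the first is an obstruction valid for \emph{every} $S$, the second is a requirement on the \emph{particular} $S$ you are trying to build, and together they are perfectly compatible. The actual work in \cite{Hou} is not a short interplay between these two bounds but a structural argument (in the spirit of Kaneko's refinement of Stiebitz) that grows a feasible pair and uses the forbidden subgraph to rule out the blocking configurations; you have not supplied that, nor indicated concretely how $(K_4-e+v)$-freeness enters. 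So what you have is a correct set-up and an honest pointer to where the difficulty lies, but not a proof.
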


\begin{theorem}\label{JieMa}\cite{Hou,JieMa}
Let $G$ be either a $(C_4, K_4,diamond)$-free graph or a $(K_3,C_8,K_{2,3})$-free graphs and $f_1,f_2:V(G)\rightarrow N\setminus\{0,1\}$ two functions. Assume that $d_G(x)\ge f_1(x)+f_2(x)-1$ for every vertex $x\in V(G)$. Then there is a partition $(V_1,V_2)$ of $G$ such that:

\begin{enumerate}
\item $d_{G[V_1]}(x)\ge f_1(x)$ for every vertex $x\in V_1$, and
\item $d_{G[V_2]}(x)\ge f_2(x)$ for every vertex $x\in V_2$.
\end{enumerate}
\end{theorem}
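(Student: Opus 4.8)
Since the statement is quoted verbatim from \cite{Hou,JieMa}, I only sketch the line of attack one would take. The plan is to run the variational (extremal‑partition) method underlying Theorems~\ref{Stiebitz} and~\ref{Hou}, but with the tighter additive term $-1$, and to use the excluded subgraphs precisely to handle the configurations where the argument for a larger slack stops working (some hypothesis beyond $d_G(x)\ge f_1(x)+f_2(x)-1$ is unavoidable: $K_4$ with $f_1\equiv f_2\equiv 2$ satisfies the degree condition but has no nontrivial good partition, and it contains both $K_4$ and $K_3$, so it is excluded in both cases). Concretely: first reduce to $G$ connected (if $G$ is disconnected one can put a whole component on one side and finish trivially); then assume the conclusion fails, and among all partitions $(V_1,V_2)$ with $V_1,V_2\neq\emptyset$ fix one minimising the \emph{total deficiency}
\[
\mathrm{def}(V_1,V_2)\;=\;\sum_{x\in V_1}\max\{0,\,f_1(x)-d_{G[V_1]}(x)\}\;+\;\sum_{x\in V_2}\max\{0,\,f_2(x)-d_{G[V_2]}(x)\},
\]
refined by secondary parameters (such as the number of internal edges and then the number of \emph{tight} vertices, i.e.\ vertices $x\in V_i$ with $d_{G[V_i]}(x)=f_i(x)$; the precise choice is part of the technical setup). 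Since $f_1,f_2\ge 2$, a partition of deficiency $0$ is exactly what the theorem asks for, so it suffices to show that any partition carrying a deficient vertex admits a transfer (or swap) move that strictly improves this refined quantity.

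For the move, suppose $v\in V_1$ has $d_{G[V_1]}(v)<f_1(v)$. The hypothesis $d_G(v)\ge f_1(v)+f_2(v)-1$ gives $d_{G[V_2]}(v)\ge f_2(v)$, so transferring $v$ to $V_2$ makes $v$ (exactly) satisfied, and the only vertices whose deficiency can rise are the neighbours of $v$ inside $V_1$. Comparing the drop $f_1(v)-d_{G[V_1]}(v)\ (\ge 1)$ with the number of neighbours of $v$ in $V_1$ that are tight or already deficient, minimality forces $v$ to have at least $f_1(v)-d_{G[V_1]}(v)$ such neighbours, whence $d_{G[V_1]}(v)\ge f_1(v)/2$: the hard case is a ``half‑satisfied'' vertex sitting inside a cluster of tight vertices. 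One then iterates, trying to transfer $v$ together with a minimal set $S\ni v$ of tight vertices that is \emph{closed}, meaning every $x\in S$ is satisfied in $V_2\cup S$, and propagating the tightness demand along $v$, its tight neighbours, their tight neighbours, and so on. Because each vertex brought in carries its own demand, this propagation produces a subgraph of $G[V_1]$ that is locally very dense: many triangles and short cycles packed around $v$ and its tight neighbourhood.

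That forced density is where the excluded subgraphs do their work, and producing the required contradiction is the main obstacle. In the $(C_4,K_4,\mathrm{diamond})$‑free case, $C_4$‑freeness bounds the common neighbourhood of every adjacent pair and diamond‑freeness forbids two triangles sharing an edge, so the tight clusters are essentially vertex‑disjoint triangles on a forest‑like skeleton; a careful count of demands against available internal edges then shows that either the closed set $S$ can actually be transferred (strictly improving the extremal quantity, a contradiction) or a $C_4$, $K_4$ or diamond must already be present, again a contradiction. In the triangle‑free branch one argues analogously, with $K_{2,3}$‑freeness and $C_8$‑freeness controlling the (now essentially bipartite) local structure around $v$. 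Carrying out this dichotomy is a long and somewhat delicate case analysis on how the tight neighbourhoods of $v$, of its tight neighbours, and of their tight neighbours overlap; it is exactly the part that differs between the two graph classes, and from the shorter proofs of Theorems~\ref{Stiebitz} and~\ref{Hou}, where the extra unit (or two) of slack lets a single transfer always succeed. A final routine point is to ensure that the transfer and swap moves never empty a part, which is harmless here because any singleton part contains a deficient vertex ($f_i\ge 2$) and can instead be treated by a rotation.
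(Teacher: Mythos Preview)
The paper does not prove Theorem~\ref{JieMa}; it is quoted from \cite{Hou,JieMa} and used as a black box (in Proposition~\ref{lowboundC4free}). There is therefore nothing in the paper to compare your proposal against.

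As for the proposal itself: you correctly identify the extremal/variational scheme behind Theorems~\ref{Stiebitz}--\ref{JieMa}, and your remark that $K_4$ with $f_1\equiv f_2\equiv 2$ shows some structural hypothesis is unavoidable is apt. But what you have written is explicitly a plan, not a proof: the entire content of \cite{Hou,JieMa} lies in the ``long and somewhat delicate case analysis'' that you name and then skip. In particular, your description of the $(C_4,K_4,\mathrm{diamond})$-free branch (``tight clusters are essentially vertex-disjoint triangles on a forest-like skeleton'') and of the triangle-free branch (``$K_{2,3}$-freeness and $C_8$-freeness controlling the local structure'') are heuristics, not arguments; nothing you wrote actually forces the contradiction, and the precise extremal quantity one minimises (and the secondary tie-breakers) matters a great deal for whether the transfer/swap moves go through. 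If your intent is merely to record that the theorem is cited rather than proved here, a one-line pointer to \cite{Hou,JieMa} suffices; if your intent is to supply an independent proof, the hard part is still entirely missing.
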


\section{Some bounds for $q(G)$}\label{bounds}
We give some bounds for the degree ratio of a graph. We also give exact values of $q(G)$ for particular graphs.

For a disconnected graph $G$ taking one component in $V_1$ and the remaining vertices in $V_2$ we obtain an optimal partition achieving $q(G)=1$. Hence the graphs we consider are assumed connected. Then any partition of $V(G)$ induces a non empty cut and $q(G)<1$.
\begin{lemma}\label{upbound}
Let $G$ be a connected graph, then
\begin{center}
  $q(G)\le\max_{uv\in E(G)}\min\{ {d(u)\over d[u]},{d(v)\over d[v]}\}$
\end{center}
\end{lemma}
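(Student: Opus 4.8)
The plan is to exhibit, for an arbitrary optimal partition $(V_1,V_2)$, a single edge $uv$ of $G$ that ``witnesses'' the bound, i.e.\ such that $\min\{d(u)/d[u],\,d(v)/d[v]\}\ge q(G)$. Since $G$ is connected and $(V_1,V_2)$ is a non trivial partition, the induced cut $C$ is non empty, so we may pick an edge $uv\in C$ with, say, $u\in V_1$ and $v\in V_2$. The idea is that such a crossing edge forces both $u$ and $v$ to have at least one neighbor outside their own part, which immediately bounds their degree ratios.

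First I would estimate $q^1_G(u)$. Because $v\in N(u)\setminus V_1$, we have $d_{G[V_1]}(u)\le d_G(u)-1$, hence
\[
q^1_G(u)=\frac{d_{G[V_1]}[u]}{d_G[u]}=\frac{d_{G[V_1]}(u)+1}{d_G(u)+1}\le\frac{d_G(u)}{d_G(u)+1}=\frac{d(u)}{d[u]}.
\]
Symmetrically, using $u\in N(v)\setminus V_2$, I get $q^2_G(v)\le d(v)/d[v]$. Now since $(V_1,V_2)$ is optimal, $q(G)=\min_{w\in V(G)}q^{i}_G(w)\le q^1_G(u)$ and likewise $q(G)\le q^2_G(v)$; combining,
\[
q(G)\le\min\Bigl\{\tfrac{d(u)}{d[u]},\tfrac{d(v)}{d[v]}\Bigr\}\le\max_{xy\in E(G)}\min\Bigl\{\tfrac{d(x)}{d[x]},\tfrac{d(y)}{d[y]}\Bigr\},
\]
which is exactly the claimed inequality.

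There is essentially no hard step here; the only thing to be careful about is the logical direction. The statement bounds $q(G)$ from \emph{above} by a \emph{maximum} over edges, so it suffices to produce \emph{one} edge meeting the bound — I do not need to control all edges, nor do I need the optimal partition to be unique. The one genuinely necessary observation is that an optimal partition is a non trivial partition of a connected graph and therefore its cut contains at least one edge; everything else is the two-line neighborhood count above applied to an endpoint of such an edge on each side.
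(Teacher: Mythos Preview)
Your proof is correct and follows essentially the same approach as the paper: pick a crossing edge of the (optimal) partition, bound each endpoint's degree ratio by $d(\cdot)/d[\cdot]$, and conclude. The paper's version is terser (it applies the argument to an arbitrary partition rather than singling out an optimal one), but the idea is identical.
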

\begin{proof}
Since $G$ is connected in any partition $(V_1,V_2)$ there exists $uv\in E(G)$ such that $u\in V _1,v\in V_2$. Then $q^1(u)\le{d(u)\over d[u]}$ and $q^2(v)\le{d(v)\over d[v]}$. The result follows.
\end{proof}

When $G$ is a tree we have the following.
\begin{coro}\label{tree}
If $G$ is a tree, then
\begin{center}
$q(G)=\max_{uv\in E(G)}\min\{ {d(u)\over d[u]},{d(v)\over d[v]}\}$
\end{center}
\end{coro}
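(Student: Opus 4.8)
The plan is to establish the inequality $q(G) \ge \max_{uv \in E(G)} \min\{d(u)/d[u], d(v)/d[v]\}$, since the reverse inequality is exactly Lemma \ref{upbound}. So I would fix an edge $uv \in E(G)$ attaining the maximum on the right-hand side, and exhibit an explicit partition $(V_1, V_2)$ of the tree whose worst degree ratio is at least $\min\{d(u)/d[u], d(v)/d[v]\}$. The natural choice is to remove the edge $uv$ from the tree: since $G$ is a tree, $G - uv$ has exactly two connected components, one containing $u$ and one containing $v$; let $V_1$ be the vertex set of the component containing $u$ and $V_2$ that of the component containing $v$. This is a genuine non-trivial partition because both components are non-empty.

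Next I would compute $q^i_G(w)$ for each vertex $w$ under this partition. The key observation is that for every vertex $w \ne u$ in $V_1$, all edges of $G$ incident to $w$ stay inside $V_1$: the only edge of $G$ leaving $V_1$ is $uv$ itself, because $(V_1, V_2)$ comes from deleting the single edge $uv$ in a tree. Hence $d_{G[V_1]}(w) = d_G(w)$ and $q^1_G(w) = d[w]/d[w] = 1$ for all $w \in V_1 \setminus \{u\}$, and symmetrically $q^2_G(w) = 1$ for all $w \in V_2 \setminus \{v\}$. The only two vertices whose ratio is less than $1$ are $u$ and $v$: vertex $u$ loses exactly the one neighbor $v$, so $d_{G[V_1]}(u) = d_G(u) - 1$ and $q^1_G(u) = d(u)/d[u]$; likewise $q^2_G(v) = d(v)/d[v]$.

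Therefore the worst ratio over all vertices for this partition equals $\min\{d(u)/d[u], d(v)/d[v]\}$, which gives $q(G) \ge \min\{d(u)/d[u], d(v)/d[v]\}$; maximizing over the choice of edge $uv$ yields $q(G) \ge \max_{uv \in E(G)} \min\{d(u)/d[u], d(v)/d[v]\}$. Combined with Lemma \ref{upbound} this proves equality. I do not anticipate a real obstacle here; the only point requiring a word of care is the appeal to the tree structure — it is precisely acyclicity that guarantees $G - uv$ splits into two components and that no vertex other than $u$ or $v$ has a neighbor on the far side of the cut. One should also note the degenerate case where $G = K_2$: then removing the unique edge gives two singletons, $d(u) = d(v) = 0$, and both ratios are $0/1 = 0$, consistent with the formula.
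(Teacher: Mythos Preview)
Your argument is correct and is exactly the approach the paper (very tersely) takes: invoke Lemma~\ref{upbound} for the upper bound, and for the lower bound delete an optimizing edge $uv$, use acyclicity to get two components, and observe that only $u$ and $v$ have ratio below $1$. One small slip in your closing parenthetical: for $G=K_2$ the degrees in $G$ are $d(u)=d(v)=1$, so the formula gives $1/2$, not $0/1$; indeed $q(K_2)=1/2$ under the partition into two singletons, consistent with the claim.
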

\begin{proof}
By Lemma \ref{upbound}  taking an edge $uv$ maximizing $\min\{ {d(u)\over d[u]},{d(v)\over d[v]}\}$.
\end{proof}

We give some lower bounds.

\begin{proposition}\label{lowbound}
For every graph $G$ we have $q(G)\ge \min\{{1\over 2}, {1\over 2+{1\over p}}\}$ where \\ $2p=\min_{v\in V(G)}\{d_G(v) : d_G(v)\  {\rm is\  even}\}$. The bound is tight.
\end{proposition}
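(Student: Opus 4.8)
The plan is to produce, for a connected graph $G$, one partition $(V_1,V_2)$ in which every vertex $v$ keeps at least $\lfloor (d_G(v)-1)/2\rfloor$ of its neighbours on its own side, and then to read off the bound from the defining identity $q^i_G(v)=\dfrac{d_{G[V_i]}(v)+1}{d_G(v)+1}$. (A disconnected $G$ has $q(G)=1$, so I may assume $G$ connected; if $G$ has no vertex of even degree I read $p=\infty$ and $\tfrac{1}{2+1/p}$ as $\tfrac12$.)

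I would obtain the required partition from Theorem \ref{Stiebitz} applied with $f_1(x)=f_2(x)=\lfloor (d_G(x)-1)/2\rfloor$. Its hypothesis holds because $f_1(x)+f_2(x)+1=2\lfloor (d_G(x)-1)/2\rfloor+1\le d_G(x)$, so it yields $(V_1,V_2)$ with $d_{G[V_i]}(x)\ge \lfloor (d_G(x)-1)/2\rfloor$ for every $x\in V_i$. The one point that needs care — and the main obstacle in making the argument fully rigorous — is that $(V_1,V_2)$ must be a genuine partition, i.e.\ have two nonempty parts. For $\delta(G)\ge 3$ this is exactly what the extremal proof of Theorem \ref{Stiebitz} delivers; and when $\delta(G)\le 2$ I would instead take $V_1=\{v\}$, $V_2=V(G)\setminus\{v\}$ for a vertex $v$ of minimum degree, which is nontrivial and still satisfies the same inequalities: $d_{G[V_1]}(v)=0=\lfloor(d_G(v)-1)/2\rfloor$ since $d_G(v)\le 2$, and every $u\in V_2$ loses at most the neighbour $v$, so $d_{G[V_2]}(u)\ge d_G(u)-1\ge \lfloor(d_G(u)-1)/2\rfloor$.

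With $(V_1,V_2)$ at hand, the rest is arithmetic. Fix $x\in V_i$. If $d_G(x)=2k+1$ is odd then $d_{G[V_i]}(x)\ge k$, so $q^i_G(x)\ge \dfrac{k+1}{2k+2}=\dfrac12$. If $d_G(x)=2k$ is even then $d_{G[V_i]}(x)\ge k-1$, so $q^i_G(x)\ge \dfrac{k}{2k+1}=\dfrac{1}{2+1/k}$; and since $2k$ is an even degree of $G$ we have $2k\ge 2p$, hence $k\ge p$ and $q^i_G(x)\ge \dfrac{1}{2+1/p}$. Taking the worst vertex, $\min_{v} q^i_G(v)\ge \min\{\tfrac12,\tfrac{1}{2+1/p}\}$, which is the claimed lower bound for $q(G)$; the minimum equals $\tfrac{1}{2+1/p}$ precisely when $G$ has a vertex of even degree.

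Finally, for tightness I would use complete graphs. In any partition of $K_n$ with part sizes $a$ and $n-a$, a vertex in the part of size $a$ has closed neighbourhood of size $a$ in that part and of size $n$ in $K_n$, so the partition has quality $\min\{a,n-a\}/n$; maximising over $a$ gives $q(K_n)=\lfloor n/2\rfloor/n$. If $n$ is even then $K_n$ has no vertex of even degree and the bound is $\tfrac12=(n/2)/n$; if $n=2p+1$ then $K_n$ is $2p$-regular and the bound is $\tfrac{1}{2+1/p}=p/(2p+1)$. In both cases $q(K_n)$ attains the lower bound.
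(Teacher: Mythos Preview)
Your proof is correct and follows essentially the same route as the paper's: apply Stiebitz's theorem with $f_1=f_2=\lfloor (d(\cdot)-1)/2\rfloor$, carry out the odd/even case split on $d(v)$, and exhibit complete graphs for tightness. Your extra care about the nontriviality of the partition when $\delta(G)\le 2$ and about the convention when no vertex has even degree goes beyond what the paper writes out, but the core argument is identical.
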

\begin{proof}
We use Theorem \ref{Stiebitz}. We define $f_i(v)=\lfloor {1\over 2}(d(v)-1)\rfloor,i=1,2$. Hence $d(v)\ge f_1(v)+f_2(v)+1$ and then there exists $(V_1,V_2)$ a partition of $V(G)$ such that $d_{G[V_i]}(v)\ge f_i(v),\, i=1,2$ for every vertex $v\in V(G)$. Hence ${{d_{G[V_i]}(v)+1}\over {d(v)}}\ge {{p+1}\over{2p+2}}={1\over 2},\, i=1,2$ when $d(v)=2p+1$, and ${{d_{G[V_i]}(v)+1}\over {d(v)}}\ge {{p}\over{2p+1}}={1\over 2+{1\over p}},\, i=1,2$ when $d(v)=2p$.

One can observe that $q(K_{2p})={1\over 2}$ and $q(K_{2p+1})={1\over 2+{1\over p}}$. It suffices to take a (almost) balanced partition.
\end{proof}

\begin{proposition}\label{lowboundC3}
For a $(K_4-e+v)$-free graph $G$ we have $q(G)\ge{1\over 2}$. The bound is tight.
\end{proposition}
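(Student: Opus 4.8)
The plan is to apply Theorem~\ref{Hou} with $f_1(v)=f_2(v)=\lfloor d_G(v)/2\rfloor$. This is the right choice because a partition $(V_1,V_2)$ witnesses $q(G)\ge\tfrac12$ exactly when every $v\in V_i$ satisfies $d_{G[V_i]}(v)\ge\lfloor d_G(v)/2\rfloor$: the inequality $\tfrac{d_{G[V_i]}(v)+1}{d_G(v)+1}\ge\tfrac12$ is equivalent to $2d_{G[V_i]}(v)\ge d_G(v)-1$, i.e.\ to $d_{G[V_i]}(v)\ge\lceil\tfrac{d_G(v)-1}{2}\rceil=\lfloor d_G(v)/2\rfloor$. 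So I would first dispose of the case $\delta(G)\ge2$: then $f_1(v)=f_2(v)=\lfloor d_G(v)/2\rfloor\ge1$ and $f_1(v)+f_2(v)=2\lfloor d_G(v)/2\rfloor\le d_G(v)$ for every $v$, so Theorem~\ref{Hou} gives a partition $(V_1,V_2)$ with $d_{G[V_i]}(v)\ge\lfloor d_G(v)/2\rfloor$ for all $v\in V_i$; since $\lfloor d/2\rfloor+1\ge\tfrac{d+1}{2}$, this yields $q^i_G(v)\ge\tfrac12$ for every vertex, hence $q(G)\ge\tfrac12$.

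It then remains to reduce the general case to $\delta(G)\ge2$, and I would do this by induction on $|V(G)|$. If $|V(G)|=2$ then $G=K_2$ and $q(K_2)=\tfrac12$. If $|V(G)|\ge3$ and $\delta(G)\ge2$, the first paragraph already gives $q(G)\ge\tfrac12$. If $|V(G)|\ge3$ and $\delta(G)=1$, pick a leaf $v$ with neighbour $u$; since $G$ is connected and $|V(G)|\ge3$ we have $d_G(u)\ge2$, so $G'=G-v$ is connected, $(K_4-e+v)$-free, and has $|V(G')|\ge2$. By induction $G'$ admits a partition $(V_1',V_2')$ with $q^i_{G'}(w)\ge\tfrac12$ for every $w$, and I would form $(V_1,V_2)$ by putting $v$ in the part containing $u$. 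Then $q^i_G(v)=\tfrac{1+1}{1+1}=1$; for $w\notin\{u,v\}$ neither $d_{G[V_i]}(w)$ nor $d_G(w)$ changes, so $q^i_G(w)=q^i_{G'}(w)\ge\tfrac12$; and for $u$ the ratio passes from $\tfrac{a+1}{b+1}$ to $\tfrac{a+2}{b+2}$ with $a=d_{G'[V_i']}(u)\le d_{G'}(u)=b$, and $\tfrac{a+2}{b+2}\ge\tfrac{a+1}{b+1}$ whenever $0\le a\le b$. Hence $q(G)\ge\tfrac12$.

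For tightness I would take $G=K_{2p}$: by the remark following Proposition~\ref{lowbound}, $q(K_{2p})=\tfrac12$, and $K_{2p}$ is $(K_4-e+v)$-free because every induced subgraph of a complete graph is complete whereas $K_4-e+v$ is not.

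The step I expect to be the real obstacle, and the only non-formal part, is the reduction to $\delta(G)\ge2$: Theorem~\ref{Hou} insists that $f_1$ and $f_2$ be everywhere positive, so vertices of degree $1$, which genuinely occur (for instance throughout any tree), must be peeled off beforehand. One has to check both that deleting a leaf leaves a connected graph on at least two vertices and that putting the leaf back next to $u$ does not drop $u$'s ratio below $\tfrac12$. It is worth noting that a degree-$1$ vertex never constrains anything, since $\tfrac{d_{G[V_i]}(v)+1}{d_G(v)+1}\ge\tfrac{0+1}{1+1}=\tfrac12$ automatically; the delicate point in the peeling is purely the protection of the leaf's neighbour.
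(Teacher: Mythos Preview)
Your proof is correct and follows the same approach as the paper---applying Theorem~\ref{Hou} with $f_1(v)=f_2(v)=\lfloor d(v)/2\rfloor$---though you are in fact more careful: the paper invokes Theorem~\ref{Hou} directly without addressing its positivity hypothesis on $f_1,f_2$, whereas your leaf-peeling induction closes that gap when $\delta(G)=1$. For tightness the paper uses $K_{2p+1,2p+1}$ rather than $K_{2p}$, but both examples are valid $(K_4{-}e{+}v)$-free graphs with $q=\tfrac12$.
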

\begin{proof}
We use Theorem \ref{Hou}. We define $f_i(v)=\lfloor {1\over 2}d(v)\rfloor,\,i=1,2$. Hence $d(v)\ge f_1(v)+f_2(v)$ and then there exists $(V_1,V_2)$ a partition of $V(G)$ such that $d_{G[V_i]}(v)\ge f_i(v),\, i=1,2$ for every vertex $v\in V$. Hence, ${{d_{G[V_i]}(v)+1}\over {d(v)}}\ge {{p+1}\over{2p+2}}={1\over 2},\, i=1,2$ when $d(v)=2p+1$, and ${{d_{G[V_i]}(v)+1}\over {d(v)}}\ge {{p+1}\over{2p+1}}> {1\over 2},\, i=1,2$ when $d(v)=2p$.

One can observe that $q(K_{2p+1,2p+1})={1\over 2}$. It suffices to take an almost balanced partition.
\end{proof}

\begin{proposition}\label{lowboundC4free}
For every graph $G$ that is either $(C_4,K_4,diamond)$-free or \\ $(K_3,C_8,K_{2,3})$-free, we have $q(G)>{1\over 2}$.
\end{proposition}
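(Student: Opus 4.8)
The plan is to mimic the proofs of Propositions \ref{lowbound} and \ref{lowboundC3}, but now using Theorem \ref{JieMa} instead of Theorems \ref{Stiebitz} or \ref{Hou}. The point of Theorem \ref{JieMa} is that it only requires $d_G(x)\ge f_1(x)+f_2(x)-1$, with the price that the target functions must take values at least $2$. So the first thing I would check is that we can always arrange $f_1,f_2\ge 2$: if $\delta(G)\ge 3$ this is immediate from the construction below, and the small-degree vertices (degree $1$ or $2$) will need to be handled separately, since for them a partition cannot force enough internal neighbors. I expect the statement is really being applied under an implicit assumption that $G$ has minimum degree large enough, or else a short preliminary argument deals with vertices of degree $\le 2$; in the clean case I will assume $\delta(G)\ge 3$ and remark on the rest.

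Assuming $\delta(G)\ge 3$, I would set $f_i(v)=\bigl\lfloor \tfrac12(d(v)+1)\bigr\rfloor$ for $i=1,2$. Then $f_1(v)+f_2(v)-1 \le d(v)$: indeed if $d(v)=2p$ then $f_i(v)=p$ (using $\lfloor (2p+1)/2\rfloor=p$) and $f_1+f_2-1=2p-1\le 2p$; if $d(v)=2p+1$ then $f_i(v)=p+1$ and $f_1+f_2-1=2p+1\le 2p+1$. Also $d(v)\ge 3$ forces $f_i(v)\ge 2$, so Theorem \ref{JieMa} applies (using that $(C_4,K_4,\mathrm{diamond})$-free or $(K_3,C_8,K_{2,3})$-free is exactly its hypothesis), and yields a partition $(V_1,V_2)$ with $d_{G[V_i]}(v)\ge f_i(v)$ for every $v\in V_i$.

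It remains to compute the ratio. For $v$ with $d(v)=2p$ we get $q^i_G(v)=\tfrac{d_{G[V_i]}(v)+1}{d(v)+1}\ge \tfrac{p+1}{2p+1} > \tfrac12$. For $v$ with $d(v)=2p+1$ we get $q^i_G(v)\ge \tfrac{p+2}{2p+2} > \tfrac12$. In both cases the ratio is strictly above $\tfrac12$, so $q(G)>\tfrac12$, as claimed. One subtlety to flag: Theorem \ref{JieMa} as stated only guarantees $d_{G[V_i]}(x)\ge f_i(x)$, hence $V_1,V_2$ could in principle be empty — but $f_i\ge 2$ forces each part to be nonempty (a vertex with $\ge 2$ internal neighbors lives in a part of size $\ge 3$), so $(V_1,V_2)$ is a genuine partition and the bound is legitimate.

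The main obstacle is the low-degree vertices. If $G$ has a vertex of degree $1$ or $2$, no partition can make its ratio exceed $\tfrac12$ in general (e.g. a pendant vertex), so the claim as literally stated would be false for such $G$; I would either invoke an implicit standing hypothesis $\delta(G)\ge 3$, or — if the intended reading is more permissive — note that these classes with a degree-$\le 2$ vertex can be reduced/handled by an ad hoc argument, but that is where the real work would lie. Everything else is a routine floor-function bookkeeping exercise parallel to the two preceding propositions.
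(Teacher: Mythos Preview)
Your argument is essentially identical to the paper's: your choice $f_i(v)=\lfloor\tfrac12(d(v)+1)\rfloor$ is exactly the paper's $f_i(v)=\lceil\tfrac12 d(v)\rceil$ written differently, and the ratio computations for the even and odd cases are the same. So as far as the main line goes, you have reproduced the intended proof.

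Where you go beyond the paper is in flagging the hypothesis $f_1,f_2:V(G)\to\mathbb{N}\setminus\{0,1\}$ in Theorem~\ref{JieMa}. You are right that this forces $\delta(G)\ge 3$ for the argument to apply, and the paper's proof simply does not address this --- it applies the theorem without checking that condition. So the gap you see is genuine and is present in the paper as well; indeed $q(K_2)=q(P_3)=\tfrac12$, and both graphs satisfy the forbidden-subgraph hypotheses, so the statement is literally false without a minimum-degree assumption.

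One small correction to your discussion: a pendant vertex is \emph{not} itself an obstruction to $q(G)>\tfrac12$. If $v$ is a leaf with neighbour $u$ and both lie in $V_i$, then $q^i(v)=2/2=1$. The obstruction comes from the global structure (as in $K_2$ or $P_3$), not from the local impossibility of satisfying a degree-$1$ vertex. So your parenthetical ``e.g.\ a pendant vertex'' is misleading, even though your broader point --- that the proof as written needs $\delta(G)\ge 3$ --- is correct.
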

\begin{proof}
We use Theorem \ref{JieMa}.
We define $f_i(v)=\lceil{1\over 2}d(v)\rceil,i=1,2$. Hence $d(v)\ge f_1(v)+f_2(v)-1$ and then there exists $(V_1,V_2)$ a partition of $V(G)$ such that $d_{G[V_i]}(v)\ge f_i(v),i=1,2$ for every vertex $v\in V(G)$. Hence, ${{d_{G[V_i]}(v)+1}\over {d(v)}}\ge {{p+2}\over{2p+2}}>{1\over 2},i=1,2$ when $d(v)=2p+1$, and ${{d_{G[V_i]}(v)+1}\over {d(v)}}\ge {{p+1}\over{2p+1}}> {1\over 2},i=1,2$ when $d(v)=2p$.
\end{proof}

We give some exact values.

\begin{fact}\label{ktriangle}
For a $k$-triangle we have $q(T_k)=(\lfloor{k\over 2}\rfloor+1){1\over k+2}$.
\end{fact}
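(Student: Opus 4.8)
The plan is to pin down $q(T_k)$ by a short case analysis for the upper bound together with one explicit optimal partition for the lower bound. First I would record the local data of $T_k$: the two ``apices'' satisfy $d(s)=d(t)=k+1$, hence $d[s]=d[t]=k+2$, while every $v_i$ has $d(v_i)=2$, hence $d[v_i]=3$. Consequently, in any partition $(V_1,V_2)$ the ratio of a vertex $v_i$ is either $\tfrac13$ or $\tfrac23$, and the ratios of $s$ and $t$ are of the form $\tfrac{m}{k+2}$.

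For the upper bound I would split on the location of $s$ and $t$. If $s$ and $t$ lie in the same part, say $V_1$, then $V_2$ is a non-empty subset of $\{v_1,\dots,v_k\}$, and any $v_i\in V_2$ has both its neighbours $s,t$ outside $V_2$, so $q^2(v_i)=\tfrac13$; since $3(\lfloor k/2\rfloor+1)\ge k+2$ for every $k\ge1$ (check $k$ even and $k$ odd), this partition achieves at most $(\lfloor k/2\rfloor+1)\tfrac1{k+2}$. If instead $s\in V_1$ and $t\in V_2$, set $a=|V_1\cap\{v_1,\dots,v_k\}|$ and $b=k-a$; then $s$ has exactly $a$ neighbours in $V_1$ and $t$ has exactly $b$ neighbours in $V_2$, so $q^1(s)=\tfrac{a+1}{k+2}$ and $q^2(t)=\tfrac{b+1}{k+2}$, and from $(a+1)+(b+1)=k+2$ we get $\min\{a+1,b+1\}\le\lfloor(k+2)/2\rfloor=\lfloor k/2\rfloor+1$, hence $\min\{q^1(s),q^2(t)\}\le(\lfloor k/2\rfloor+1)\tfrac1{k+2}$. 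In both cases the worst ratio is at most the claimed value, so $q(T_k)\le(\lfloor k/2\rfloor+1)\tfrac1{k+2}$.

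For the matching lower bound I would exhibit the partition $V_1=\{s\}\cup\{v_1,\dots,v_{\lfloor k/2\rfloor}\}$ and $V_2=\{t\}\cup\{v_{\lfloor k/2\rfloor+1},\dots,v_k\}$, with the convention that the first block of $v_i$'s is empty when $k=1$. Here $q^1(s)=(\lfloor k/2\rfloor+1)\tfrac1{k+2}$, $q^2(t)=(\lceil k/2\rceil+1)\tfrac1{k+2}\ge q^1(s)$, and each $v_i$ retains exactly one of its two neighbours, so $q^i(v_i)=\tfrac23$. Using $(\lfloor k/2\rfloor+1)\tfrac1{k+2}\le\tfrac12<\tfrac23$ (again immediate by parity of $k$), the worst ratio of this partition equals $(\lfloor k/2\rfloor+1)\tfrac1{k+2}$, giving the reverse inequality and hence equality.

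I do not anticipate a genuine obstacle here: $T_k$ has so few edges that the structural analysis is forced, and the only mildly fiddly points are the two elementary inequalities $3(\lfloor k/2\rfloor+1)\ge k+2$ and $2(\lfloor k/2\rfloor+1)\le k+2$, both of which reduce to $2\lfloor k/2\rfloor\le k$ and similar after clearing denominators. Note also that this is consistent with the earlier observation that $q(K_{2p+1})=\tfrac1{2+1/p}$, since $T_1=C_3=K_3$ and the formula yields $\tfrac13$.
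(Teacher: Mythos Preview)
Your argument is correct and follows the same approach as the paper: a case split on whether $s$ and $t$ lie in the same part, together with the explicit balanced partition $V_1=\{s,v_1,\dots,v_{\lfloor k/2\rfloor}\}$, $V_2=\{t,v_{\lfloor k/2\rfloor+1},\dots,v_k\}$. The paper's proof is terser---it leaves the pigeonhole step $\min\{a+1,b+1\}\le\lfloor k/2\rfloor+1$ in the separated case implicit---but the structure is identical.
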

\begin{proof}
Clearly, if $s,t\in V_i$ then there exists $v\in V_j,j\ne i,$ such that $q^j(v)={1\over 3}$. Now from $V_1=\{s,v_1,\ldots,v_{\lfloor {k\over 2}\rfloor}\}, V_2=\{t,v_{\lfloor {k\over 2}\rfloor+1},\ldots,v_k\}$ we obtain $q(T_k)=(\lfloor{k\over 2}\rfloor+1){1\over k+2}$.
\end{proof}

Now we show the following.
\begin{proposition}\label{C3}
$q(G)={1\over 3}$ if and only if $G=C_3$. When $G\ne C_3$ then $q(G)\ge{2\over 5}$.
\end{proposition}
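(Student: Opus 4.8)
The plan is to prove both directions separately. For the ``if'' direction, a direct computation on $C_3$ shows $q(C_3)=\frac13$: every non-trivial partition of the three vertices puts two vertices on one side and one vertex $v$ alone on the other, and then $q^j(v)=\frac{d_{G[\{v\}]}[v]}{d_G[v]}=\frac{1}{3}$, so the minimum over all partitions of the worst ratio is exactly $\frac13$ (this is also Fact \ref{ktriangle} with $k=1$). The substance is the ``only if'' direction, which I would restate in the contrapositive and in a sharper form: if $G\ne C_3$ then $q(G)\ge\frac25$. Since this sharper statement immediately yields both ``$q(G)=\frac13\Rightarrow G=C_3$'' and the last sentence of the proposition, proving it is all that remains.

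To prove $q(G)\ge\frac25$ for connected $G\ne C_3$, I would first dispose of graphs of small maximum degree. If $\Delta(G)\le 1$ then $G=K_2$ and $q(G)=\frac12$. If $\Delta(G)=2$ then $G$ is a path or a cycle on $n\ge 3$ vertices with $G\ne C_3$; for paths use Corollary \ref{tree} (an edge incident to two degree-$2$ vertices, which exists once $n\ge 4$, gives ratio $\frac23$; $P_3$ gives $\frac23$ as well, and $P_2=K_2$ gives $\frac12$), and for cycles $C_n$ with $n\ge 4$ split into two paths of length $\lceil n/2\rceil$ and $\lfloor n/2\rfloor$: every vertex keeps at least one of its two neighbors, so every ratio is at least $\frac23$ except possibly the four endpoints, which have ratio $\frac23$; so $q(C_n)=\frac23>\frac25$ for $n\ge 4$. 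Hence we may assume $\Delta(G)\ge 3$.

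For the main case $\Delta(G)\ge 3$, the idea is to use the bound $q(G)\ge\min\{\tfrac12,\tfrac{1}{2+1/p}\}$ from Proposition \ref{lowbound}, where $2p$ is the smallest even degree occurring in $G$ (if any). If $G$ has a vertex of even degree, then $2p\le\Delta(G)$ fails in general but we only need some even degree: note $\tfrac{1}{2+1/p}\ge\tfrac25$ exactly when $p\ge 2$, i.e.\ when the smallest even degree is at least $4$. So the remaining difficulty is concentrated in two situations: (i) $G$ has a vertex of degree $2$ (so $p=1$ and Proposition \ref{lowbound} only gives $\tfrac13$), or (ii) $G$ has no vertex of even degree at all, in which case Proposition \ref{lowbound} gives nothing and $G$ is a graph all of whose degrees are odd and $\ge 3$. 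I expect situation (i) --- handling degree-$2$ vertices --- to be the main obstacle, since a badly placed degree-$2$ vertex can drop a ratio to $\tfrac13$; the plan is to argue that when $\Delta(G)\ge 3$ and $G\ne C_3$ one can always choose the partition of Proposition \ref{lowbound} (or a local modification of it) so that each degree-$2$ vertex retains both its neighbors on its own side, or else is adjacent to a vertex of higher degree that absorbs it, giving its ratio value $1$ or $\tfrac12$ rather than $\tfrac13$; a short case analysis on the neighborhood of a degree-$2$ vertex (its two neighbors adjacent or not, their degrees) should close this. For situation (ii), apply Theorem \ref{Stiebitz} with $f_i(v)=\lfloor\tfrac12(d(v)-1)\rfloor$ as in the proof of Proposition \ref{lowbound}: since every degree is odd and $\ge 3$, each vertex $v$ has $d(v)=2p_v+1$ with $p_v\ge1$, so $q^i(v)\ge\frac{p_v+1}{2p_v+2}=\tfrac12\ge\tfrac25$, and we are done. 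Assembling these pieces gives $q(G)\ge\frac25$ whenever $G\ne C_3$, and together with $q(C_3)=\frac13$ this proves the proposition.
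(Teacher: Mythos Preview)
Your reduction to case (i) --- connected graphs with $\Delta(G)\ge 3$ containing a vertex of degree~$2$ --- is correct, and this is indeed the heart of the matter. But the step you leave to ``a short case analysis on the neighborhood of a degree-$2$ vertex'' is the entire difficulty, and the local-repair idea you sketch does not obviously close it. Concretely: the Stiebitz partition from Proposition~\ref{lowbound} uses $f_i(v)=\lfloor\tfrac12(d(v)-1)\rfloor$, which gives $f_i(v)=0$ for every degree-$2$ vertex, so such a vertex may well end up isolated in its part. The natural repair is to move each isolated degree-$2$ vertex to the side containing its neighbors; this is harmless for all other vertices, but it can empty one side entirely. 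For instance in $K_{2,3}$ the partition $V_1=\{v_1\}$, $V_2=\{s,t,v_2,v_3\}$ is a legitimate Stiebitz partition, and your repair sends $v_1$ to $V_2$, leaving $V_1=\emptyset$. One can of course find a \emph{better} Stiebitz partition here, but you have not argued that one always exists, and ``case analysis on the neighborhood of a single degree-$2$ vertex'' will not produce such a global guarantee when there are many degree-$2$ vertices.

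The paper takes a different route for exactly this reason. It first reduces (via easy arguments about leaves, cut-edges and cut-vertices) to the case where $G$ is $2$-connected with $\delta(G)=2$, and then argues by induction on $|V(G)|$: pick a degree-$2$ vertex $v$ with $N(v)=\{s,t\}$, delete $v$ (adding the edge $st$ if it is absent), apply the induction hypothesis to get a partition of the smaller graph with all ratios $\ge\tfrac25$, and then reinsert $v$ on the side of one of $s,t$, checking that the ratio of $s$ (or $t$) is not pushed below $\tfrac25$. The induction sidesteps the global-repair problem because the smaller graph already comes with a good partition rather than a Stiebitz partition that might need fixing. If you want to salvage your direct approach, you would need an argument that among all Stiebitz partitions there is one in which no degree-$2$ vertex is isolated in its part; this may be true, but it is not a local statement and is not shorter than the paper's induction.
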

\begin{proof}
Clearly $q(C_3)={1\over 3}$. Let $G\ne C_3$. By Proposition \ref{lowbound} $q(G) > {1\over 3}$ when $\delta(G)>2$. If $G$ is not connected then $q(G)=1$. If $G$ has a leaf then $q(G)\ge{1\over 2}$.  Assume that $uv$ is a cut-edge and let $C_1,C_2$ be the two components of $G-uv$. Let $(V(C_1),V(C_2))$ the corresponding partition of $V(G)$. We have $q(u)={d(u)\over d[u]}\ge {2\over 3}$, $q(v)={d(v)\over d[v]}\ge {2\over 3}$, so $q(G)\ge {2\over 3}$. Now assume that $v$ is a cut-vertex. Let $C$ be a component of $G-v$ where $v$ has the least number of neighbors. Let $V_1=C,\, V_2=V(G)\setminus C$. For each vertex $u$ of $V_1$ we have $q^1(u)={d(u)\over d[u]}\ge {2\over 3}$. For each vertex $u$ of $V_2\setminus \{v\}$ we have $q^2(u)=1$ and for the cut-vertex $v$ we have $q^2(v)> {1\over 2}$. Thus we can assume that $G$ is connected, $\delta(G)=2$, and $G$ has no cut-edge nor cut-vertex and $G$ has at least four vertices.

The  proof is by induction on $\vert V(G)\vert$. Let ${\cal P}'\subseteq{\cal P}$ be the partitions of $G$ obtained as follows: $f_i(v)=p,\, i=1,2$ when $d(v)=2p+1$; $f_1(v)= p,\, f_2(v)=p-1$ when $d(v)=2p$. By Theorem \ref{Stiebitz} there exists $(V_1,V_ 2)\in \cal P$ such that $\vert N(v)\cap V_i\vert+1\ge f_i(v)+1,\, i=1,2$ for every vertex $v\in V_i$. Thus $(V_1,V_ 2)\in \cal P'$. We prove that there exists $(V_1,V_2)\in {\cal P'}$ such that $q^i(v)\ge{2\over 5},i=1,2$.

Let $\vert V(G)\vert=4$. Since $G$ has no cut-edge, no leaf and $G\ne T_2$ then $G=K_4$ or $G=C_4$. In both  cases there exists $(V_1,V_2)\in {\cal P'}$ with $q^i(v)\ge{1\over 2}\ge{2\over 5},i=1,2$. (Since $G$ has a perfect matching  $M=\{uv,u'v'\}$  it suffices to take  $V_1=\{u,v\},\, V_2=\{u',v'\}$).

We assume that there exists $(V'_1,V'_2)\in {\cal P'}$ with $q^i(v)\ge{2\over 5},\, i=1,2$, for any graph $G'$ with $4\le\vert V(G')\vert \le n-1,\, n\ge 5$.

Let $v\in V(G)$ with $N(v)=\{s,t\}$. The first case is when $st\not\in E(G)$. Let $G'$ be the graph obtained from $G-v$ by adding the edge $st$. From the induction hypothesis there exists a partition $(V'_1,V'_2)$ of $V'$ such that $q(G')\ge{2\over 5}$. If $s,t\in V'_1$ then taking $V_1=V'_1\cup\{v\},V_2=V'_2$ we have $q(G)\ge q(G')\ge{2\over 5}$. Now assume that $s\in V_1,\, t\in V_2$. By the induction hypothesis $q(G' + st) \ge {2\over 5}$, so we deduce that $q(G) \geq {2\over 5}$.

The second case is when $st\in E(G)$.  From the induction hypothesis there exists a partition $(V'_1,V'_2)$ of $G'=G-v$ such that $q(G')\ge{2\over 5}$. If $s,t\in V'_1$, respectively $s,t\in V'_2$, then taking $V_1=V'_1\cup\{v\},V_2=V'_2$, respectively $V_1=V_1',V_2=V'_2\cup\{v\}$, we have $q(G)\ge q(G')\ge{2\over 5}$.

Now, without loss of generality let $s\in V'_1,t\in V'_2$. First, let $d_{G'}(s)=2p+1$.  Note that $p\ge 1$, else $s$ would be a cut-vertex. We have $\vert N_{G'}(s)\cap V'_1\vert+1\ge p+1$. Then ${{\vert N_{G'}(s)\cap V'_1\vert+1}\over{d_{G}(s)+1}}\ge {p+1\over 2p+3}\ge{2\over 5}$. Second, let $d_{G'}(s)=2p$. We have $\vert N_{G'}(s)\cap V'_1\vert+1\ge p+1$. Then ${{\vert N_{G'}(s)\cap V'_1\vert+1}\over{d_{G}[s]}}\ge {p+1\over 2p+2}\ge{1\over 2}$. Taking $V_1=V'_1,V_2=V'_2\cup\{v\}$ we obtain that $q(G)\ge  {2\over 5}$.
\end{proof}

\begin{figure}[htbp]
\begin{center}
\includegraphics[width=14cm, height=3cm, keepaspectratio=true]{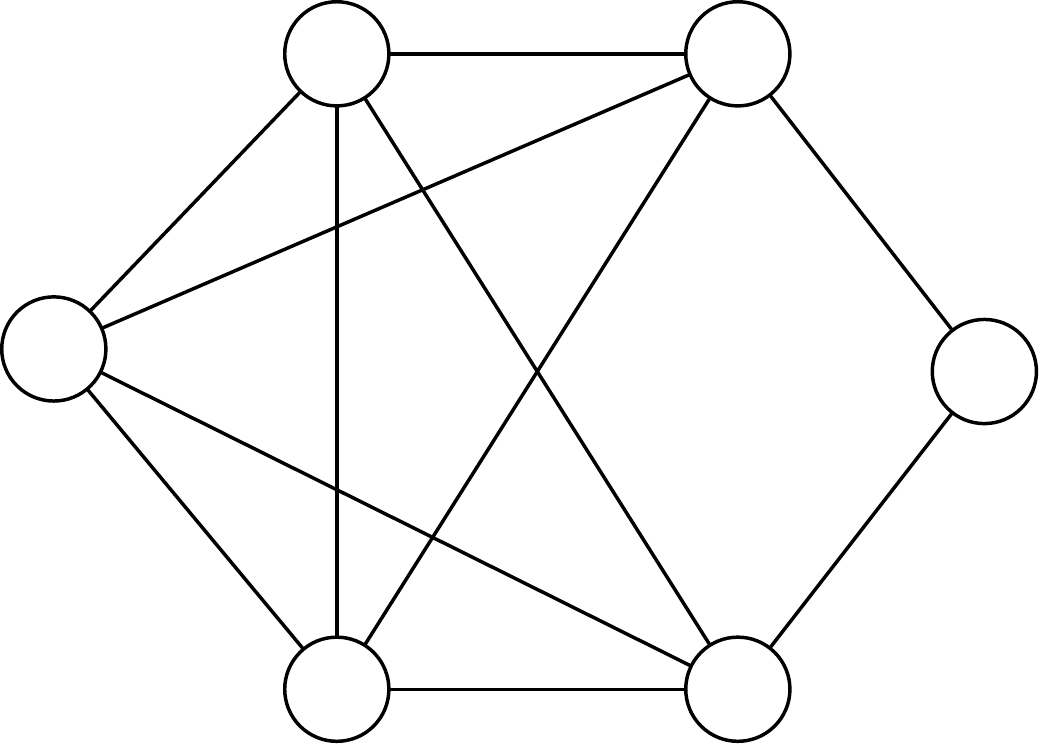}
\end{center}
\caption{The graph $G=\overline{K_2 \cup claw}$ with $q(G)={2\over 5}$.}
\label{gra25}
\end{figure}

\begin{proposition}\label{2/5D6}
 Let $G$ be a graph with $\Delta(G)\le 6$ and ${\cal F}=\{K_5,K_5-e,T_3,\overline{K_2 \cup claw}\}$. Then $q(G)={2\over 5}$ if and only if $G\in\cal F$. When $G\not\in{\cal F}\cup \{C_3\}$ then $q(G)\ge{3\over 7}$.
\end{proposition}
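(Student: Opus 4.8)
The plan is to treat the two implications separately. The ``if'' direction is a finite check, while the ``only if'' direction will follow from the sharper estimate that $q(G)\ge 3/7$ whenever $\Delta(G)\le 6$ and $G\notin{\cal F}\cup\{C_3\}$: indeed $3/7>2/5>1/3$, and $q(C_3)=1/3$ by Proposition \ref{C3}, so that estimate forces every $G$ with $q(G)=2/5$ into ${\cal F}$. For the ``if'' direction I would verify $q(G)=2/5$ for each $G\in{\cal F}$: for $T_3$ this is Fact \ref{ktriangle} with $k=3$; for $K_5=K_{2p+1}$ with $p=2$ it is the value $q(K_{2p+1})=1/(2+1/p)$ obtained in the proof of Proposition \ref{lowbound}; and $K_5-e$ and $\overline{K_2\cup claw}$ are done by hand. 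For the last two the key remark is that a vertex of degree $4$ satisfies $q^i(v)\ge 3/7$ only if it keeps at least two neighbours in its own part, and a short case analysis on the sizes of the two parts shows that no non-trivial partition arranges this for all degree-$4$ vertices at once, so $q\le 2/5$; an explicit balanced partition then attains $2/5$ (a $2+3$ split along an edge of $K_5-e$, and, writing $\overline{K_2\cup claw}=\overline{K_2\cup K_{1,3}}$ with $K_2=\{x,y\}$, star centre $o$, leaves $\ell_1,\ell_2,\ell_3$, the split $\{o,x,\ell_1\}$ versus $\{y,\ell_2,\ell_3\}$, which is tight at $\ell_1$). Since every graph of ${\cal F}$ has $\Delta\le 6$, this settles the ``if'' part.

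For the main estimate I would show: $G$ connected, $\Delta(G)\le 6$, $G\notin{\cal F}\cup\{C_3\}$ imply $q(G)\ge 3/7$ (if $G$ is disconnected, $q(G)=1$). The reductions in the proof of Proposition \ref{C3} carry over unchanged: a leaf, a cut-edge or a cut-vertex give $q(G)\ge 1/2$, so we may assume $G$ is $2$-connected. If $\delta(G)\ge 3$, Theorem \ref{Stiebitz} with $f_1=f_2=1$ on vertices of degree $3$ and $f_1=f_2=2$ on vertices of degree $5,6$ leaves only vertices of degree exactly $4$ as an obstruction; these I would deal with by a local exchange across the cut, arguing that unless the neighbourhoods of such vertices are ``rigid'' (which forces $G\in\{K_5,K_5-e\}$) the partition can be repaired to reach $3/7$. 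If $\delta(G)=2$, I would argue by induction on $|V(G)|$, mirroring Proposition \ref{C3}: apply Theorem \ref{Stiebitz} with $f_1(v)=f_2(v)=(d(v)-1)/2$ for $d(v)$ odd, $f_1(v)=1,f_2(v)=0$ for $d(v)=2$, $f_1(v)=2,f_2(v)=1$ for $d(v)=4$, and $f_1(v)=f_2(v)=2$ for $d(v)=6$ (all admissible because $\Delta(G)\le 6$) to restrict to a family ${\cal P}'$ of partitions; delete a vertex $v$ with $d(v)=2$, $N(v)=\{s,t\}$; apply the inductive hypothesis to $G-v$, adding the edge $st$ when $st\notin E(G)$; and extend the partition by placing $v$ beside $s$ or beside $t$. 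If $s,t$ lie in the same part we are done; if $st\notin E(G)$ and they are separated, putting $v$ beside $s$ strictly increases $q^i(s)$ and leaves the remaining ratios unchanged; if $st\in E(G)$ and they are separated, the endpoint not receiving $v$ gains a unit of degree with no new same-part neighbour, and its ratio is controlled by the ${\cal P}'$-guarantee of $f_1$ same-part neighbours on the $V_1$-side. (Graphs already covered by Proposition \ref{lowboundC3} or Proposition \ref{lowboundC4free}, which give $q>1/2$, could be peeled off first, leaving only the genuinely rigid cases for the induction.)

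The routine part of this scheme is the $C_3$-style arithmetic of the induction step. The main obstacle is the last sub-case above: when $\{v,s,t\}$ induces a triangle split by the partition, the endpoint that does not receive $v$ may drop to ratio $2/5$ or $3/8$ precisely when it has degree $3$ or $6$ in $G-v$ and carries only the bare $f_1$-guarantee, and both of these are below $3/7$. Closing this gap—by strengthening which partitions are kept in ${\cal P}'$, by a more careful choice of which endpoint receives $v$ (using that at most one of $s,t$ can be tight), or by a structural argument showing that when the bad configuration persists $G$ must be one of finitely many small graphs—is the heart of the proof; it is there, together with the $\delta(G)\ge 3$ exceptions and a finite base-case check over $2$-connected graphs with $\Delta\le 6$ on few vertices, that the exact set ${\cal F}\cup\{C_3\}$ is pinned down.
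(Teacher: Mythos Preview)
Your ``if'' direction and your initial reductions (disconnected, leaf, cut-edge, cut-vertex, $C_3$-free via Theorem~\ref{Hou}) match the paper. From that point on, however, your strategy diverges from the paper's and carries a gap you yourself flag but do not close.

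The paper does \emph{not} run an induction on $|V(G)|$ in the style of Proposition~\ref{C3}, and it does not split into $\delta(G)\ge 3$ versus $\delta(G)=2$. Instead, once $G$ is biconnected and contains a triangle $T=\{a,b,c\}$, it introduces the notion of a \emph{good pair} $(A,B)$: two disjoint nonempty sets with $|N_G[v]\cap A|/d_G[v]\ge 3/7$ for $v\in A$ and symmetrically for $B$. The key observation, which is exactly where $\Delta(G)\le 6$ is used, is that any good pair can be greedily extended to a full partition with ratio $\ge 3/7$: a vertex $v\notin A\cup B$ that is bad for the complement has at most one neighbour outside $A\cup B$, hence at least $\lceil (d(v)-1)/2\rceil$ neighbours in one of $A,B$, and $(\lceil (d(v)-1)/2\rceil+1)/d[v]\ge 3/7$ for $d(v)\le 6$. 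The rest of the proof is a structural case analysis producing a good pair: if $G-T$ contains a cycle $C$ then $(T,C)$ works; otherwise $G-T$ is a forest whose leaves are anchored to $T$ by biconnectivity, and a short enumeration of the possible components (isolated vertices, paths, trees with three leaves) either yields a good pair built from $T$ and a suitable path/cycle, or forces $G\in{\cal F}$.

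Your plan, by contrast, hinges on repairing a Stiebitz partition after deleting a degree-$2$ vertex $v$ with $N(v)=\{s,t\}$ and $st\in E(G)$. You correctly identify the failure mode: if $s,t$ are separated, the endpoint not receiving $v$ may sit at ratio $2/5$, and nothing in the ${\cal P}'$-guarantee prevents this. You propose three possible fixes (tightening ${\cal P}'$, choosing the endpoint, or a structural fallback), but none is carried out, and it is not clear that any of them goes through without essentially rebuilding the paper's good-pair analysis. The $\delta(G)\ge 3$ branch has the same problem in disguise: ``local exchange across the cut'' for degree-$4$ vertices is not an argument, and the assertion that rigidity forces $G\in\{K_5,K_5-e\}$ is precisely the structural content that needs proof. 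A further technicality you do not mention is that after deleting $v$ (and possibly adding $st$), the smaller graph may land in ${\cal F}\cup\{C_3\}$, so the inductive hypothesis does not apply and a separate argument is needed there as well. The paper's good-pair approach sidesteps all of these issues at once.
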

\begin{proof}
For each $G\in \cal F$ we have $q(G)={2\over 5}$. By Proposition \ref{C3} $q(G)\ge{2\over 5}$ when $G\ne C_3$. If $G$ has a cut-vertex then $q(G)>{1\over 2}$. When $G$ is $C_3$-free using Theorem \ref{Hou} with $f_1(v)=f_2(v)=\lfloor{d(v)\over 2}\rfloor$, $v\in V(G),$ we obtain $q(G)\ge{1\over 2}$.
So from now on, the graph $G$ we are concerned is biconnected and contains a triangle.\\

We define $(A,B)$ as a {\it good pair} if $A,B$ are two non empty vertex disjoint subsets of $V(G)$ such that
${{\vert N_{G}[v]\cap A\vert}\over{d_{G}[v]}}\ge{3\over 7}$ when $v\in A$ or ${{\vert N_{G}[v]\cap B\vert}\over{d_{G}[v]}}\ge{3\over 7}$ when $v\in B$.

We suppose that there exists a good pair $(A,B)$ for $G$.  If $A\cup B=V(G)$ then $(A,B)$ is a partition and $q(G)\ge{3\over 7}$. Now let $C=V(G)\setminus (A\cup B)$. If for every $v\in C$ we have ${{\vert N_{G}[v]\cap C\vert}\over{d_{G}[v]}}\ge{3\over 7}$ then $(A\cup B,C)$ is a partition and $q(G)\ge{3\over 7}$. Otherwise, there exists $v\in C$ such that ${{\vert N_{G}[v]\cap C\vert}\over{d_{G}[v]}}<{3\over 7}$. Since $d(v)\le 6$ we have $\vert N_{G}(v)\cap C\vert\le 1$. Hence $\max(\vert N_{G}(v)\cap A\vert,\vert N_{G}(v)\cap B\vert)\ge\lceil{1\over 2}(d(v)-1)\rceil$. Moreover ${{\lceil{1\over 2}(d(v)-1)\rceil+1}\over{d[v]}}\ge{3\over 7}$. Then we proceed as follows: when $\vert N_{G}(v)\cap A\vert\ge \vert N_{G}(v)\cap B\vert$ we set $A'=A\cup\{v\},B'=B$ otherwise $A'=A,B'=B\cup\{v\}$. Thus $(A',B')$ is a good pair. Iterating this procedure we obtain a partition of $G$ and $q(G)\ge {3\over 7}$.\\

The sequel of the proof consists to show that $G$ has a good pair. Let $T=\{a,b,c\}$ be a triangle of $G$. If $G-T$ contains a cycle $C$ then $(T,C)$ is a good pair. Now we can assume that $G-T$ is acyclic. Let $C$ be a connected component of $G-T$. Note that since $G$ is biconnected every leaf of $C$ has a neighbor in $T$. Moreover when $C$ consists of $v$, an isolated vertex, then $v$ has two neighbors in $T$.

The first case is when $C$ contains two distinct vertices $u,v$ such that $d_G[u],d_G[v]\le 4$. Let $P$ be the path between $u$ and $v$ in $C$. Since $\vert N[u]\cap P\vert=\vert N[v]\cap P\vert=2$ we have ${\vert N[u]\cap P\vert\over d_G[u]}={\vert N[v]\cap P\vert\over d_G[v]}\ge{2\over 4}>{3\over 7}$. Moreover, for every $w\in P,w\ne u,v,$ we have $\vert N[w]\cap P\vert=3$, and so ${\vert N[w]\cap P\vert\over d_G[w]}\ge{3\over 7}$. Hence $(T,P)$ is a good pair.

 From now on every component $C$ is a tree with at most one vertex $v$ with $d_G[v]\le 4$. We consider the case where $C$ has three leaves, say $u,v,w$. We can assume that $d[u],d[v]\ge 5$ and that $c$ is a neighbor of $w$. Let $P$ be the path from $v$ to $w$ in $C$. Since $P\cup \{c\}$ is a cycle, then $(P\cup \{c\}, \{a,b,u\})$ is a good pair. Thus from now on every component $C$ is either an isolated vertex or a path (for convenience, now, when we are referring to a path it is not an isolated vertex).

The second case is when there exists two paths $C,C'$. Let $u$ and $v$ be the two leaves of $C$, without loss of generality $d_G[v]=5$, so $v$ is complete to $T$ and $c$ is a neighbor of $u$. Let  $v'$ be a leaf of with $d[v']=5$. Then $(C\cup\{c\},\{a,b,v'\})$ is a good pair.

The third case is when $C$ is a path and there exists $w$ an isolated vertex in $G-T$. Let $u$ and $v$ be two leaves of $C$,  without loss of generality $d_G[v]=5$ and $c$ is a neighbor of $u$. Recall that $w$ has two neighbors in $T$. When $a,b\in N(w)$ then $(C\cup\{c\},\{a,b,w\})$ is a good pair. Otherwise without loss of generality $N(w)=\{a,c\}$. If $d_G[u]\leq 4$ then $(\{c,u,w\},\{a,b,v\})$ is a good pair. Else $d_G[u]=5$ and $(C\cup\{b\},\{a,c,w\})$ is a good pair.

The fourth case is when $C$ is the unique component of $G-T$ and $C$ is a path. Let $u$ and $v$ be two leaves of $C$. Without loss of generality $d_G[v]=5$ and $c$ is a neighbor of $u$. Firstly $C=u-v$. If $c$ is the unique neighbor of $u$ in $T$ then $(\{a,b\},\{c,u,v\})$ is a good pair; otherwise $G$ is either $K_5$ or $K_5-e$. Second, $C=v-\cdots-w-u$. When $d_G[u]\le 4$ we have $d_G[w]\ge5$ and thus $(\{c,u,w\}, \{a,b,v\})$ is a good pair. Otherwise $d_G[u]=5$. The first subcase is when $d_G[w]=3$. If $C=v-w-u$ then $G=\overline{K_2 \cup claw}$. Otherwise $w$ has a neighbor $s$ in $C$, $s\ne u$, such that $d_G[s]\ge 5$. Hence $s$ has a neighbor, say $c$, in $T$ and therefore $(\{c,s,u,w\}, \{a,b,v\})$ is a good pair. The second subcase is when $d_G[w]\ge 4$. So $w$ has a neighbor in $T$, say $c$. Then $(\{c,u,w\}, \{a,b,v\})$ is a good pair.

Eventually, every connected component of $G-T$ consists of an isolated vertex. Note that $3\le d[v]\le 4$.  Without loss of generality $6\ge d(a)\ge d(b)\ge d(c)\ge 2$. When $d(c)=2$ then $G=T_k$. Since $k\not\in\{1,3\}$, Fact \ref{ktriangle} implies $q(G)\ge {3\over 7}$. Now, let $d(c)=3$ and $v$ be the neighbor of $c$ in $G-T$. Then $(\{c, v\}, V(G)\setminus \{c,v\})$ is a good pair. Now,  $6\ge d(a)\ge d(b)\ge d(c)\geq 4$. If $G-T$ consists of exactly two vertices then $G=K_5-e\in\cal F$. So $G-T$ has at least three vertices. $c$ has two neighbors in $G-T$, say $u,v$. Since $d(a)\ge d(b)\ge d(c)$ there exists $w\in G-T,w\ne u,v,$ such that $a$ and $b$ are two neighbors of $w$. Then $(\{c,u,v\}, \{a,b,w\})$ is a good pair.
Hence $G$ has a good pair and the proof is completed.
\end{proof}

\section{Regular graphs}\label{regul}

We give the exact value of $q(G)$ when $G$ is a cubic graph. Then we show that Degree Ratio Partition is NP-complete in $k$-regular graphs, $k\geq 4$.\\

Given $G$ a connected $k$-regular graph then deciding if $q(G)\ge{k\over k+1}$ is equivalent to decide if $G$ has a matching-cut. From \cite{Chvatal,Gomes,LeRanderath} we know that deciding if a cubic graph has a matching-cut is polynomial (in fact among the cubic graphs only $K_3$ and $K_{3,3}$ have no matching-cut), and the problem becomes  NP-complete for $4$-regular graphs. The problem is also NP-complete when $G$ is bipartite and all the vertices of one side have a degree three and all the vertices of the other side have a degree four.

\begin{proposition}\label{cubic}
If $G$ is a connected $3$-regular graph distinct from $K_4$ or $K_{3,3}$ then $q(G)={3\over 4}$.
\end{proposition}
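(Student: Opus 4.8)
The plan is to sandwich $q(G)$ between $\tfrac34$ from above and $\tfrac34$ from below. The upper bound is immediate from Lemma~\ref{upbound}: since $G$ is connected and $3$-regular, every edge $uv$ of $G$ satisfies $\min\{d(u)/d[u],d(v)/d[v]\}=\tfrac34$, hence $q(G)\le\tfrac34$. So the whole content is the matching lower bound $q(G)\ge\tfrac34$.

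For the lower bound I would use the reformulation already noted just before the statement: for a connected cubic graph, a partition $(V_1,V_2)$ witnesses $q(G)\ge\tfrac34$ exactly when the cut it induces is a matching-cut. Concretely, if the set $C$ of edges between $V_1$ and $V_2$ is a matching, then each vertex $v$ is incident to at most one edge of $C$, hence keeps at least two of its three neighbours in its own part, so $q^i_G(v)=\dfrac{d_{G[V_i]}(v)+1}{4}\ge\dfrac34$ for every $v$; conversely, if some vertex had two neighbours on the other side, $C$ would fail to be a matching. Thus it suffices to exhibit a matching-cut in $G$. Here I would invoke the structural results of Chv\'atal, of Gomes et al., and of Le--Randerath \cite{Chvatal,Gomes,LeRanderath}, according to which the only connected cubic graphs admitting no matching-cut are $K_4$ and $K_{3,3}$. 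Since $G$ is assumed distinct from both, it has a matching-cut $M$; the partition induced by $M$ then gives $q(G)\ge\tfrac34$, and together with the upper bound we conclude $q(G)=\tfrac34$.

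The step carrying all the weight is the cited classification of matching-cut-free connected cubic graphs as exactly $K_4$ and $K_{3,3}$ — the rest is a single-line degree count. I do not expect a genuine obstacle, since this classification is available in the literature; a self-contained proof would instead require reproving it, e.g. by a case analysis on the structure around a shortest cycle or via the decomposition arguments of the cited papers, but routing through the known result is the natural choice.
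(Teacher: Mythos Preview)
Your proposal is correct and follows essentially the same approach as the paper: both reduce the question to the existence of a matching-cut and invoke the classification that among connected cubic graphs only $K_4$ and $K_{3,3}$ lack one. You are simply more explicit than the paper, which compresses the upper bound and the matching-cut equivalence into the one-line remark preceding the proposition.
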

\begin{proof}
By the remark above we now that deciding if $q(G)\ge{3\over4}$ is equivalent to decide if $G$ has a matching-cut. Every cubic graph distinct from $K_3$ and $K_{3,3}$ has a matching-cut.\end{proof}

Chv\'atal in \cite{Chvatal} gives a polynomial time algorithm to compute a matching-cut for the graphs with $\Delta(G)\le 3$. So using this algorithm we can find an optimal partition of a cubic graph $G$  when $G$ is not $K_3$ or $K_{3,3}$.

\begin{proposition}\label{4reg}
Let $G$ be a connected $4$-regular graph.  If $G=K_5$ then $q(G)={2\over 5}$ else when $G$ contains a matching-cut then $q(G)={4\over 5}$ else $q(G)={3\over 5}$.
\end{proposition}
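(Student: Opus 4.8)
The plan is to exploit that in a $4$-regular graph every closed neighbourhood has size $5$, so for any partition $(V_1,V_2)$ and any $v\in V_i$ one has $q^i_G(v)=\frac{d_{G[V_i]}[v]}{5}\in\{1/5,2/5,3/5,4/5,1\}$; hence $q(G)$ itself lies in this five-element set. I would first narrow this down: since $G$ is connected, every nontrivial partition induces a non-empty cut, so $q(G)<1$; and since $G\ne C_3$, Proposition \ref{C3} gives $q(G)\ge 2/5$. Thus $q(G)\in\{2/5,3/5,4/5\}$, and the task is purely to decide which of the three values occurs.

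The top value is governed by matching-cuts. By the observation preceding Proposition \ref{cubic}, for a connected $4$-regular graph $q(G)\ge 4/5$ holds exactly when $G$ has a matching-cut; combined with $q(G)<1$ this gives that $G$ has a matching-cut if and only if $q(G)=4/5$. This disposes of the second case of the statement. It is also worth recording that $K_5$ has no matching-cut: in any nontrivial partition of $V(K_5)$ into parts of sizes $k$ and $5-k$, a vertex of the size-$(5-k)$ part is incident to $k$ cut-edges and a vertex of the size-$k$ part to $5-k$ cut-edges, and $k$ and $5-k$ cannot both be $\le 1$. Hence the three cases of the statement are genuinely disjoint and mutually consistent.

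It then remains to treat a $4$-regular $G$ with no matching-cut, where by the above $q(G)\in\{2/5,3/5\}$. If $G=K_5$, then Proposition \ref{2/5D6} applies (as $\Delta(K_5)=4\le 6$ and $K_5\in\mathcal F$) and gives $q(K_5)=2/5$. If $G\ne K_5$, I would check that $G\notin\mathcal F\cup\{C_3\}$: the graph $C_3$ is $2$-regular, and of the four graphs in $\mathcal F=\{K_5,K_5-e,T_3,\overline{K_2 \cup claw}\}$ only $K_5$ is $4$-regular (the other three have a vertex of degree $2$ or $3$). Since also $\Delta(G)=4\le 6$, Proposition \ref{2/5D6} yields $q(G)\ge 3/7$; as $3/7>2/5$ and $q(G)$ lies in the discrete set $\{2/5,3/5,4/5\}$ with $q(G)<4/5$ (no matching-cut), we are forced to $q(G)=3/5$.

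There is no deep obstacle here: the argument is a case analysis over the five admissible ratios, each case being closed by a result already at hand (the matching-cut characterization, Proposition \ref{C3}, Proposition \ref{2/5D6}). The one point that needs care is excluding $q(G)=2/5$ for $4$-regular $G\ne K_5$: the bound $q(G)\ge 2/5$ of Proposition \ref{C3} does not suffice, and one genuinely needs the sharper $q(G)\ge 3/7$ of Proposition \ref{2/5D6} — so the hypothesis $\Delta(G)\le 6$ is implicitly being used — together with the observation that among the exceptional graphs in $\mathcal F\cup\{C_3\}$ only $K_5$ is $4$-regular.
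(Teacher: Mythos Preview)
Your argument is correct and takes a genuinely different route from the paper's own proof. The paper argues directly: for $G\neq K_5$ without a matching-cut it first treats the triangle-free case via Theorem~\ref{Hou} with $f_1=f_2=2$, and for graphs containing a triangle $T$ it reruns the good-pair machinery of Proposition~\ref{2/5D6}, showing that $G-T$ must contain a cycle (else one is forced to $G=K_5$) and hence a good pair exists. Your approach is more economical: you exploit the discreteness of the possible values $q(G)\in\{1/5,2/5,3/5,4/5,1\}$ in a $4$-regular graph, use the matching-cut characterisation to pin down the $4/5$ case, and then invoke Proposition~\ref{2/5D6} as a black box---checking that among the exceptional graphs only $K_5$ is $4$-regular---to push $q(G)$ strictly above $2/5$ and hence to $3/5$. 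What this buys you is brevity and a clean reduction to previously established results; what the paper's direct argument buys is independence from the rather involved case analysis inside Proposition~\ref{2/5D6} (in fact the paper essentially specialises that analysis to the $4$-regular setting, where it collapses almost immediately).
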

\begin{proof}
Since $G$ is connected we have $q(G)\le{4\over 5}$. We have $q(K_5)={2\over 5}$. Now let $G\ne K_5$.
Clearly $q(G)={4\over 5}$ if and only if $G$ has a matching-cut. Now let $G$ with no matching-cut.
When $G$ is triangle-free then by Theorem \ref{Hou} with $f_1(v)=f_2(v)=2$ for every vertex $v\in V(G)$ we obtain $q(G)\ge{3\over 5}$. Now $G$ contains a triangle $T=\{a,b,c\}$. When $G-T$ has a cycle $C$ then $(T,C)$ is a good pair (here $(A,B)$ is a good pair if ${{\vert N_{G}(v)\cap A\vert+1}\over{d_{G}[v]}}\ge{3\over 5}$ when $v\in A$ or ${{\vert N_{G}(v)\cap B\vert+1}\over{d_{G}[v]}}\ge{3\over 5}$ when $v\in B$). The arguments are the same as for the proof of Property \ref{2/5D6}. So each connected component of $G-T$ is a tree. If a component has at least two vertices it has two leaves which are connected to $a,b,c$. In this case $G-T=K_2$, so $G=K_5$ which is impossible. So each component is a unique vertex which is impossible since $G$ is $4$-regular.
\end{proof}

\begin{proposition}\label{bipNPc}
Let $G$ be a connected bipartite graph with $\delta(G)=3$ and $\Delta(G)=4$.  Deciding if $q(G)={3\over 4}$ is NP-complete.
\end{proposition}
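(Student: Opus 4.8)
The plan is to reduce everything to a sharp understanding of which degree ratios are attainable for a graph whose degrees all lie in $\{3,4\}$. For a vertex $v$ of degree $3$ the closed neighborhood has size $4$, so $q^i(v)\in\{\tfrac14,\tfrac24,\tfrac34,1\}$, while for a vertex of degree $4$ we have $q^i(v)\in\{\tfrac15,\tfrac25,\tfrac35,\tfrac45,1\}$. The crucial arithmetic observation is that the open interval $(\tfrac34,\tfrac45)$ contains none of these values, so the minimum ratio of any partition is either $\le\tfrac34$ or $\ge\tfrac45$. Translating the thresholds into the cut induced by a partition, $q^i(v)\ge\tfrac34$ holds iff $v$ is incident to at most one cut edge, so $q(G)\ge\tfrac34$ iff $G$ has a matching-cut; and $q^i(v)\ge\tfrac45$ forces a degree-$3$ vertex to have no cut edge and a degree-$4$ vertex to have at most one, so $q(G)\ge\tfrac45$ iff $G$ has a matching-cut all of whose edges join two degree-$4$ vertices. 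Writing $M$ for the set of such edges, I obtain
\[
q(G)=\tfrac34\iff G\text{ has a matching-cut and has no matching-cut contained in }M .
\]

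First I would settle membership in NP for the whole class. Since $\Delta(G)=4$, Lemma~\ref{upbound} already gives $q(G)\le\tfrac45$, and the equivalence above reduces deciding $q(G)=\tfrac34$ to (i) guessing a matching-cut, verified in polynomial time, together with (ii) certifying that $G$ has no matching-cut contained in $M$. For (ii) I would contract every edge not in $M$ (each such edge is incident to a degree-$3$ vertex and must stay monochromatic), obtaining super-vertices that are the connected components of $G-M$; a matching-cut inside $M$ is then exactly a nonempty proper bipartition of these super-vertices whose crossing $M$-edges form a matching of $G$, i.e. at most one $M$-edge leaves each degree-$4$ vertex. I would argue that this restricted matching-cut question is decidable in polynomial time for these bounded-degree bipartite graphs, so that $q(G)\ge\tfrac45$ can be tested directly and its negation certified; combined with (i) this places the problem in NP for the full class $\{\delta=3,\Delta=4\}$.

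For the hardness I would not build a reduction from scratch but invoke the result quoted in the remark at the start of Section~\ref{regul}: deciding whether a connected bipartite graph in which one side has all degrees $3$ and the other all degrees $4$ admits a matching-cut is NP-complete. Such a $(3,4)$-biregular bipartite graph $G$ has every edge joining a degree-$3$ to a degree-$4$ vertex, hence $M=\emptyset$; by the equivalence above $G$ then has no matching-cut inside $M$, so $q(G)=\tfrac34$ exactly when $G$ has a matching-cut. Since these graphs already lie in the stated class, the identity map is a polynomial reduction, proving NP-hardness over the full class and, with the membership argument above, NP-completeness.

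The main obstacle is step (ii) of membership: establishing that ``$G$ has a matching-cut contained in $M$'' (equivalently $q(G)\ge\tfrac45$) is polynomially decidable for connected bipartite graphs with degrees in $\{3,4\}$, since a degree-$4$ vertex may carry up to four $M$-edges and the ``at most one crossing edge per vertex'' requirement resembles a cardinality constraint rather than a plain $2$-coloring one. I expect this to go through by analysing the contracted super-vertex graph: when every degree-$4$ vertex carries at most one $M$-edge, the boundary of a single super-vertex is already a matching-cut inside $M$ as soon as $M\ne\emptyset$, and the remaining cases should reduce to a bounded-degree matching-cut computation in the spirit of Chv\'atal's algorithm for $\Delta\le3$. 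Securing this polynomial test, and checking that the biregular instances produced by the cited reduction are connected and genuinely realize degrees exactly $3$ and $4$, are the two points I would treat most carefully.
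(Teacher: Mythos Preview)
Your hardness argument is exactly the paper's: restrict to the Le--Randerath instances, i.e.\ connected bipartite graphs in which one side is $3$-regular and the other $4$-regular, and observe that for such $G$ one has $q(G)=\tfrac34$ if and only if $G$ has a matching-cut. That part is fine.

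Where you diverge is in the treatment of NP membership. You try to establish membership for the \emph{entire} class $\{\delta=3,\Delta=4\}$ by showing that ``$q(G)\ge\tfrac45$'' (equivalently, ``$G$ has a matching-cut inside the degree-$4$/degree-$4$ edges $M$'') is polynomially decidable, so that its negation can be appended to a matching-cut certificate. This is unnecessary, and the argument you sketch is not complete: after contracting the non-$M$ edges a single degree-$4$ vertex may carry up to four $M$-edges, so the resulting instance is not a $\Delta\le 3$ matching-cut instance and Chv\'atal's algorithm does not apply directly. You yourself flag this as the main obstacle, and it is a real one.

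The paper avoids the issue entirely. On the $(3,4)$-biregular instances every edge is incident with a degree-$3$ vertex, so $M=\emptyset$ and, by Lemma~\ref{upbound}, $q(G)\le\tfrac34$ holds automatically (the paper phrases this as ``from the structure of $G$ we have $q(G)\le\tfrac34$''). Hence on these instances $q(G)=\tfrac34\iff q(G)\ge\tfrac34\iff G$ has a matching-cut, and a matching-cut alone is an NP certificate. This already gives NP-completeness on the biregular subclass, which is all the proposition needs; there is no call to decide $q(G)\ge\tfrac45$ in general. Your second worry (connectedness and both degrees actually occurring) is harmless: the Le--Randerath construction produces connected graphs, and biregularity forces both degrees to be present.
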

\begin{proof}
From Le and Randerath \cite{LeRanderath} we know the following: deciding wether $G$ a bipartite graph such that all the vertices of one side of the bipartition have degree three and all the vertices of the other side have degree four, has a matching-cut is NP-complete. If $G$ has a matching-cut then $G$ has a partition $(V_1,V_2)$ where for any $v\in V_i,i=1,2$ at most one of its neighbor is in $V_j,j\ne i$. Since all the vertices of degree three are in the same side of $G$ we obtain $q(G)={3\over 4}$. Conversely, from the structure of $G$ we have $q(G)\le {3\over 4}$. If there exists a partition $(V_1,V_2)$ with $q(G)={3\over 4}$ then for any  $v\in V_i,i=1,2$ at most one of its neighbor is in $V_j,j\ne i$. Hence $G$ has a matching-cut.
\end{proof}

\begin{lemma}\label{kregbipcut}
For every fixed $k,k\ge 4,$ deciding wether a connected $k$-regular bipartite graph $G$ has a matching-cut is NP-complete.
\end{lemma}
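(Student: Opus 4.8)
(Sketch.) Membership in NP is immediate: a matching-cut is a polynomial certificate, and recall that a connected $k$-regular graph has a matching-cut iff $q(G)\ge\frac{k}{k+1}$, while \textsc{Degree Ratio Partition} is in NP. For hardness the plan is to reduce from the problem shown NP-complete by Le and Randerath \cite{LeRanderath} and recalled in Proposition \ref{bipNPc}: given a connected bipartite graph $H$ with bipartition $(X,Y)$, every vertex of $X$ of degree $3$ and every vertex of $Y$ of degree $4$, decide whether $H$ has a matching-cut. (Then $3|X|=|E(H)|=4|Y|$, so $|X|$ is a multiple of $4$.) From such an $H$ I would build, in polynomial time, a connected $k$-regular bipartite graph $G$ that has a matching-cut iff $H$ does.

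The construction rests on a family of rigid, almost-$k$-regular bipartite gadgets, built from $K_{k+1,k+1}$ minus a perfect matching and from the graph obtained from it by deleting one vertex. Each is bipartite, connected, has a prescribed small set of \emph{ports} (vertices of degree $k-1$, all on one side, every other vertex of degree exactly $k$), and --- this is where $k\ge 4$ is used --- is \emph{unbreakable}: in any matching-cut of any graph containing it as an induced subgraph the whole gadget lies on one side. Unbreakability follows from a short case analysis: in a matching-cut every vertex has at most one neighbour across the cut, while in $K_{k+1,k+1}$ minus a perfect matching each vertex is non-adjacent to exactly one vertex of the opposite side, so fixing the side of one vertex propagates to all; deleting one vertex to expose $k$ ports leaves the argument intact for $k\ge 4$. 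Given such gadgets I would fill the degree deficiencies of (copies of) $H$ --- each vertex has deficiency $k-3$ or $k-4$ --- by joining the deficient vertices through the appropriate number of edges to ports of rigid gadgets, choosing sides so that $G$ stays bipartite and connected. Since a one-sided, $k$-regular-interior bipartite gadget necessarily carries a multiple of $k$ ports, this dictates how deficient vertices may be grouped; and, for the reason below, one first replaces $H$ by several suitably interconnected copies of itself (or subdivides it) so that every deficient vertex has deficiency at least two.

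I expect the crux to be attaching the rigid gadgets so that the join is neither too loose nor too tight. If a rigid gadget were joined to the rest of $G$ only by a matching of edges, that matching would itself be a matching-cut of $G$ separating the gadget, so $G$ would always have a matching-cut and the reduction would be vacuous --- hence every deficient vertex must meet the gadget part in at least two edges (the reason for the preliminary blow-up of $H$). But joining a vertex to a single connected, monochromatic gadget by two or more edges pins that vertex to that gadget's side, so one global gadget would only realize one fixed, polynomially checkable partition of $V(H)$. One therefore needs a controlled supply of rigid \emph{selector} gadgets whose side a matching-cut is free to choose, wired to the copies of $H$ so that the partitions of $V(H)$ realizable by a matching-cut of $G$ are exactly those in which every vertex has at most one incident cross-edge, i.e.\ exactly the matching-cuts of $H$. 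The equivalence then has two routine directions --- lifting a matching-cut of $H$ (put each selector on the matching side, each rigid gadget on one side, no gadget edge cut), and projecting one of $G$ back (unbreakability makes every gadget monochromatic, pinning carries the sides down to $V(H)$, and the matching condition there is inherited). Carrying this out, and in particular designing the selector layer uniformly for every fixed $k\ge 4$ (the values $k=4,5$ needing extra care since the gadgets then have small degree), is where essentially all the work sits.
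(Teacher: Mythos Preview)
Your sketch sets up a heavy gadget reduction from the Le--Randerath $(3,4)$-biregular problem but leaves the decisive component --- the ``selector layer'' that is supposed to encode exactly the matching-cuts of $H$ --- entirely unbuilt; you yourself say that ``essentially all the work sits'' there, and without that construction you do not have a reduction. The preliminary step of ``replacing $H$ by several suitably interconnected copies of itself (or subdividing it)'' so that every deficiency is at least two is also not obviously matching-cut-preserving and would need its own argument, and for $k=4$ the degree-$4$ side of $H$ has deficiency~$0$, so your scheme of attaching each vertex to gadgets by at least two edges cannot even start there without a further device you have not described.

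The paper avoids all of this with a much shorter construction: induction on $k$ via the bipartite double cover. For $k=4$ it starts not from Le--Randerath but from the NP-completeness of matching-cut in connected $4$-regular graphs \cite{Gomes}, and passes to the bipartite double cover $G'$ (two copies $v_1,v_2$ of each vertex, with $v_1u_2,u_1v_2\in E(G')$ whenever $uv\in E(G)$), which is $4$-regular bipartite. For the step $k-1\to k$, given a $(k-1)$-regular bipartite $G$ it forms the same double cover and adds the perfect matching $\{v_1v_2:v\in V(G)\}$, raising every degree by one. A matching-cut of $G$ lifts to one of $G'$ by putting both copies of each vertex on the same side, so the whole argument is a few lines with no rigid gadgets, ports, or selectors.
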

\begin{proof}
The proof is by induction on $k$. Let $k=4$.
From \cite{Gomes} we know that deciding if $G$, a connected $4$-regular graph, has a matching-cut is NP-complete. From $G$ we build (in polynomial time) a connected $4$-regular bipartite graph $G'=(V(G'),E(G'))$ with $V(G')=V_1\cup V_2$ as follows: each vertex $v\in V(G)$ has two copies $v_1\in V_1,v_2\in V_2$; the edge set is $E(G')=\{v_1u_2,u_1v_2:uv\in E(G)\}$. Clearly $C\subseteq E(G)$ is a matching-cut in $G$ if and only if $C'=\{u_1v_2,u_2v_1:uv\in C\}$ is a matching-cut in $G'$.

Let $k>4$ and assume that deciding wether $G$ a connected $(k-1)$-regular bipartite graph has a matching-cut is NP-complete. From $G$ we build $G'$ a $k$-regular bipartite graph as above taking $E(G')=\{v_1u_2,u_1v_2:uv\in E(G)\}\cup\{v_1v_2:v\in V(G)\}$. Then $C\subseteq E$ is a matching-cut of $G$ if and only if $C'=\{u_1v_2,u_2v_1:uv\in C\}$ is a matching-cut of $G'$.
\end{proof}

By Theorem \ref{Hou} we have $q(G)\ge{{\lfloor{k\over 2}\rfloor+1}\over{k+1}}$ when $G$ is a $k$-regular bipartite graph.

\begin{proposition}\label{kregbipNPc}
Let $G$ be a connected $k$-regular bipartite graph, $k\ge 4$. Then deciding wether $q(G)={k\over {k+1}}$ is NP-complete.
\end{proposition}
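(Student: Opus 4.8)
The plan is to reduce from the matching-cut problem in $k$-regular bipartite graphs, which is NP-complete by Lemma \ref{kregbipcut}. The key observation, already recorded just before the statement of Proposition \ref{cubic}, is that for a connected $k$-regular graph $G$ we have $q(G)\ge \frac{k}{k+1}$ if and only if $G$ has a matching-cut: indeed, in a partition $(V_1,V_2)$ achieving ratio at least $\frac{k}{k+1}$, every vertex $v\in V_i$ must satisfy $\frac{|N_G(v)\cap V_i|+1}{k+1}\ge\frac{k}{k+1}$, i.e.\ it has at most one neighbor across the cut, which is exactly the definition of a matching-cut; conversely a matching-cut yields such a partition. Since moreover $q(G)\le\frac{k}{k+1}$ always holds for a connected $k$-regular $G$ (this is Lemma \ref{upbound}: every edge $uv$ gives $\min\{d(u)/d[u],d(v)/d[v]\}=\frac{k}{k+1}$), we conclude $q(G)=\frac{k}{k+1}$ if and only if $G$ has a matching-cut.

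With this equivalence in hand the proof is essentially immediate. First I would state the reduction: given a connected $k$-regular bipartite graph $G$, output the same graph $G$ together with the target value $q=\frac{k}{k+1}$; this is trivially polynomial. Then I would argue correctness in both directions using the equivalence above. If $G$ has a matching-cut, the induced partition $(V_1,V_2)$ gives each vertex at most one neighbor on the other side, hence $q^i_G(v)=\frac{|N_G(v)\cap V_i|+1}{k+1}\ge\frac{k}{k+1}$ for all $v$, so $q(G)\ge\frac{k}{k+1}$; combined with the upper bound $q(G)\le\frac{k}{k+1}$ from Lemma \ref{upbound} we get equality. Conversely, if $q(G)=\frac{k}{k+1}$, fix an optimal partition; the bound $q^i_G(v)\ge\frac{k}{k+1}$ forces $|N_G(v)\cap V_i|\ge k-1$, i.e.\ at most one neighbor of $v$ lies across the cut, so the cut edges form a matching-cut. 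Since membership in NP was already noted for \textsc{Degree Ratio Partition}, the problem restricted to connected $k$-regular bipartite graphs is NP-complete.

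I do not expect any genuine obstacle here; the content of the result is entirely carried by Lemma \ref{kregbipcut}, and the present proposition is the translation of that statement into the language of the degree ratio via the matching-cut characterization. The only points that need a sentence of care are (i) spelling out why ratio $\frac{k}{k+1}$ is equivalent to ``at most one neighbor across the cut'' for $k$-regular graphs, and (ii) noting that the reduction preserves connectivity and bipartiteness trivially because it is the identity map on graphs. One could alternatively phrase the whole thing as: ``By the remark preceding Proposition \ref{cubic} and Lemma \ref{upbound}, for connected $k$-regular $G$ one has $q(G)=\frac{k}{k+1}$ iff $G$ has a matching-cut; the claim now follows from Lemma \ref{kregbipcut} and the fact that \textsc{Degree Ratio Partition} is in NP.''
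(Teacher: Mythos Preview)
Your proposal is correct and follows essentially the same route as the paper: reduce from Lemma~\ref{kregbipcut} via the equivalence, for connected $k$-regular $G$, between $q(G)=\tfrac{k}{k+1}$ and the existence of a matching-cut, with the upper bound coming from Lemma~\ref{upbound}. The paper's proof is just a terser version of exactly what you wrote.
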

\begin{proof}
 We use Lemma \ref{kregbipcut}. Let $k,k\ge 4$.  Let  $G$ be a $k$-regular bipartite graph. If $G$ has matching-cut then it has a partition $(V_1,V_2)$ where for any $v\in V_i,i=1,2$ at most one of its neighbor is in $V_j,j\ne i$. Then $q(G)={k\over {k+1}}$ (recall that $G$ is connected so $q(G)\le{k\over {k+1}}$). Conversely, if there exists a partition $(V_1,V_2)$ with $q(G)={k\over {k+1}}$ then for any  $v\in V_i,i=1,2$ at most one of its neighbor is in $V_j,j\ne i$. Hence $G$ has matching-cut.
\end{proof}

\section{Product of Graphs}\label{prod}
We give some lower bounds on the degree ratio $q(G)$ when $G$ is the product of two graphs. We also show some NP-completeness results. \\

The cartesian product $G\square H$ of two graphs $G$ and $H$ is the graph whose vertex set is $V(G)\times V(H)$. Two vertices $(g_1,h_1)$ and $(g_2,h_2)$ are adjacent in $G\square H$ if either $g_1=g_2$ and $h_1h_2$ is an edge in $H$ or $h_1=h_2$ and $g_1g_2$ is an edge in $G$. For a vertex $v$ of $G$, the subgraph of $G\square H$ induced by the set $\{(g,h)\mid h\in H\}$ is called an $H$-fiber and is denoted by $H_g$. In a same way, we define  $G_h$ a $G$-fiber. Clearly, all $G$-fibers and $H$-fibers are isomorphic to $G$ and $H$ respectively. In this section we consider only the cartesian product of finite graphs $G$ and $H$ such that $G$ and $H$ are connected with at least two vertices.

Since the matching-cuts plays an important role in the maximum degree ratio problem we give the following lemma which will be used later.
\begin{lemma}\label{prodmatchcut}
  For every pair of graphs $G$ and $H$, the cartesian product $G\square H$ has a matching-cut if only if $G$ or $H$ has a matching-cut.
\end{lemma}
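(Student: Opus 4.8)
The plan is to prove both directions separately. The easy direction is: if $G$ (say) has a matching-cut $M$, then $G\square H$ has one too. Indeed, $M$ partitions $V(G)$ into $(A,B)$ with each vertex having at most one neighbor across the cut; then $(A\times V(H),\,B\times V(H))$ is a partition of $G\square H$, and the cut edges are exactly the copies of $M$ in each $G$-fiber $G_h$, $h\in V(H)$. A vertex $(g,h)$ has its neighbors either inside its $H$-fiber $H_g$ (all on the same side as $g$) or one neighbor per adjacent $G$-fiber (same side unless the $G$-edge is in $M$), so it has at most one neighbor across, i.e. this is a matching-cut. The symmetric statement for $H$ is identical.

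The substantive direction is the converse: if $G\square H$ has a matching-cut $M$, then $G$ or $H$ has one. Here I would argue via the well-known equivalent formulation of matching-cuts as a $2$-coloring: a connected graph has a matching-cut iff its vertices can be colored with two colors, both used, such that every monochromatic component is induced as a connected subgraph where — equivalently — no vertex has two neighbors of the opposite color. So fix such a coloring $c:V(G)\square V(H)\to\{0,1\}$ of $G\square H$ with both colors present and every vertex having $\le 1$ opposite-colored neighbor. The key observation is that each fiber, being isomorphic to the connected graph $G$ (resp. $H$), inherits a coloring in which every vertex has at most one opposite-colored neighbor \emph{within the fiber}; so either the fiber is monochromatic, or the restriction of $c$ to that fiber is already a matching-cut of $G$ (resp. $H$), and we are done. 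Hence we may assume \emph{every} $G$-fiber and \emph{every} $H$-fiber is monochromatic. But a monochromatic $G$-fiber $G_h$ forces, for each fixed $g$, that all the fibers $G_{h}$ share... more precisely: if all $H$-fibers are monochromatic, color each $h\in V(H)$ by the common color of $H_g$... wait — one must be careful which fibers to use. Let me instead say: suppose all $H$-fibers are monochromatic; define $c'(h)$ to be the color of the (monochromatic, by the other assumption) $G$-fiber $G_h$. If two $G$-fibers $G_{h_1},G_{h_2}$ with $h_1h_2\in E(H)$ have different colors, then every vertex $(g,h_1)$ has the opposite-colored neighbor $(g,h_2)$; since $(g,h_1)$ then has no further opposite neighbor, $H_g$ must be monochromatic in a consistent way — and running this over all $g$ shows $c'$ itself is a matching-cut-coloring of $H$ (each $h$ has $\le 1$ opposite-colored neighbor in $H$, because a vertex $(g,h)$ already uses up its one opposite neighbor on the $G$-fiber side). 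If instead all $G$-fibers have the same color and all $H$-fibers are monochromatic, then $c$ is constant on $V(G)\square V(H)$, contradicting that both colors appear. The remaining configuration — all $G$-fibers monochromatic but some $H$-fiber not — is handled symmetrically to produce a matching-cut of $G$.

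The main obstacle I anticipate is the bookkeeping in the all-fibers-monochromatic case: one has to track the ``$\le 1$ opposite neighbor'' budget of a vertex $(g,h)$ across two directions simultaneously and rule out the degenerate monochromatic-everywhere case cleanly. I would organize it by first reducing (via the fiber-restriction argument) to the case where every fiber in \emph{at least one} of the two directions is monochromatic, then pushing the induced coloring down to the factor graph and checking the matching-cut condition there edge by edge. Connectedness of $G$ and $H$ is used both to invoke the $2$-coloring characterization and to ensure the contracted coloring, if nonconstant, genuinely uses both colors.
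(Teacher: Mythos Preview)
Your approach is essentially the paper's: restrict the $2$-coloring to each fiber; a bichromatic $H$-fiber (resp.\ $G$-fiber) immediately yields a matching-cut of $H$ (resp.\ $G$), so one may assume every fiber in both directions is monochromatic, and then derive a contradiction.

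Where you diverge is in that final step, and there you make it harder than necessary. Once \emph{every} $G$-fiber and \emph{every} $H$-fiber is monochromatic, the coloring is forced to be constant outright: any two vertices $(g_1,h_1)$ and $(g_2,h_2)$ are linked via $(g_2,h_1)$, and the two legs lie in a $G$-fiber and an $H$-fiber respectively, both monochromatic. This is exactly what the paper does in one line. Your detour---defining $c'(h)$ as the color of $G_h$ and arguing that $c'$ is a matching-cut coloring of $H$---is analyzing a vacuous case: under your own standing assumption that every $H$-fiber $H_g$ is monochromatic, the value $c(g,h)=c'(h)$ is independent of $h$, so $c'$ is automatically constant and the ``two adjacent $G$-fibers of different colors'' scenario never occurs. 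The argument is not wrong, just redundant; and your closing ``remaining configuration --- all $G$-fibers monochromatic but some $H$-fiber not'' was already dispatched in the first reduction. The clean version of the push-down idea you sketch at the end (reduce to all $H$-fibers monochromatic only, then project to a coloring of $G$) does work as an alternative, but you never actually carry it out in that form.
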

\begin{proof}
First, suppose that $G$ has a matching-cut. Let $(V_1, V_2)$ be a partition of $G$ such that $E(V_1,V_2)$ is a matching-cut of $G$. We build the partition $(V_1',V_2')$ of $G\square H$ as follows: for every $v\in V_i$ we take the corresponding $H$-fiber $H_v$ in $V_i'$, $i=1,2$. Clearly $E(V_1',V_2')$ is a matching-cut of $G\square H$. The previous argument works if $H$ has a matching-cut.

Conversely, neither $G$ nor $H$ has a matching-cut. By contradiction let $(V_1', V_2')$ be a partition of $G\square H$ such that $E(V_1',V_2')$ is a matching-cut of $G\square H$. If there exists an $H$-fiber $H_u$ such that $H_u\not\subset V_i'$, $i=1,2$ then  $E(V_1'\cap H_u,V_2'\cap H_u)$ corresponds to a matching-cut of $H$. The same holds if there exists a $G$-fiber $G_u$ such that $G_u\not\subset V_i$,$i=1,2$. Yet if there is no $H$-fiber and no $G$-fiber that is cut by the partition $(V_1',V_2')$, then $E(V_1',V_2')$ is not a matching-cut of $G\square H$ which is a contradiction.
\end{proof}

Now we show two lower bounds for $q(G\square H)$.
\begin{lemma}\label{prodpart}
  Let $(V_1, V_2)$ and $(V_1',V_2')$ be, respectively, two partitions of $G$ and $G'=G\square H$ such that $V_i'=\{v' \mid v'\in H_v, v\in V_i\}$, $i=1,2$. Then $min_{v'\in V(G\square H)}(q^i_{G\square H}(v')) > min_{v\in V(G)}(q^i_G(v))$, $i=1,2$.
\end{lemma}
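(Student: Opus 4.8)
The plan is to write down $q^i_{G\square H}(v')$ explicitly for an arbitrary vertex $v'\in V_i'$, compare it with $q^i_G$ of its projection onto $G$ via an elementary inequality on fractions, and then invoke connectedness to turn the (occasionally non-strict) pointwise comparison into a strict comparison of minima.

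Fix $i\in\{1,2\}$ and take $v'=(v,h)$ with $v\in V_i$ and $h\in V(H)$. First I would decompose the closed neighbourhood of $v'$ in $G\square H$: it consists of the closed neighbourhood of $h$ inside the fiber $H_v$ together with the copies $(u,h)$ for $u\in N_G(v)$, so $d_{G\square H}[v']=d_H(h)+d_G[v]$. Because $V_i'$ contains the whole fiber $H_v$, all $d_H(h)$ fiber-neighbours of $v'$ remain in $V_i'$, whereas among the copies $(u,h)$ exactly those with $u\in V_i$ do; hence $d_{(G\square H)[V_i']}[v']=d_H(h)+d_{G[V_i]}[v]$, and therefore
$$q^i_{G\square H}(v')=\frac{d_H(h)+d_{G[V_i]}[v]}{d_H(h)+d_G[v]}.$$
Setting $a=d_H(h)$, $c=d_{G[V_i]}[v]$, $d=d_G[v]$, I would use the identity $\frac{a+c}{a+d}-\frac{c}{d}=\frac{a(d-c)}{d(a+d)}\ge 0$, with equality if and only if $a=0$ or $c=d$. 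Here $a=d_H(h)\ge 1$ since $H$ is connected on at least two vertices, $c\ge 1$ since $v\in N_{G[V_i]}[v]$, and $c\le d$. Thus for every $v'=(v,h)\in V_i'$: either $c<d$, in which case $q^i_{G\square H}(v')>q^i_G(v)$; or $c=d$, in which case $q^i_G(v)=q^i_{G\square H}(v')=1$.

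To conclude, I would note that since $G$ is connected and $(V_1,V_2)$ is non-trivial, $V_i$ contains a vertex with a neighbour in $V_j$, $j\ne i$; hence $m_i:=\min_{v\in V_i}q^i_G(v)<1$. Combining with the previous paragraph, every $v'\in V_i'$ satisfies $q^i_{G\square H}(v')>m_i$ — in the first case because $q^i_G(v)\ge m_i$, in the second because $1>m_i$. Since $V(G\square H)$ is finite the minimum is attained, so $\min_{v'\in V_i'}q^i_{G\square H}(v')>m_i=\min_{v\in V_i}q^i_G(v)$, which is the asserted inequality for each $i=1,2$ (and hence also for the worst ratios of the two partitions).

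The only delicate point is the ``saturated'' case $d_{G[V_i]}[v]=d_G[v]$, where the pointwise comparison degenerates to an equality at the value $1$; the observation that the global minimum $m_i$ lies strictly below $1$ is exactly what preserves strictness of the final inequality. Apart from correctly bookkeeping the cartesian-product neighbourhoods, there is no real obstacle.
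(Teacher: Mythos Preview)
Your proof is correct and follows the same route as the paper: compute $q^i_{G\square H}(v')$ by splitting the closed neighbourhood of $(v,h)$ into its fiber part and its $G$-part, then apply the mediant-type inequality $(a+c)/(a+d)\ge c/d$. The only difference is that the paper asserts the pointwise inequality $q^i_{G'}(v')>q^i_G(v)$ for \emph{every} $v'$, which is literally false when $N_G[v]\subseteq V_i$; you correctly isolate this saturated case and recover strictness at the level of the minima via $m_i<1$. So your argument is essentially the paper's, but with the degenerate case handled cleanly.
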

\begin{proof}
  Clearly for each vertex $v\in V(G)$ the corresponding $H$-fiber $H_v$ is such that $H_v\subset V_i'$ and for every vertex $v'\in H_v$ we have $d_{G'\setminus H_v}(v')=d_G(v)$, $d_{H_v}(v')\ge 1$. Hence for each vertex $v'\in H_v$, $q^i_{G'}(v')={d_{G[V_i]}(v) + d_{H_v}(v')+1\over d_G(v) + d_{H_v}(v')+1} > {d_{G[V_i]}(v)+1\over d_G(v)+1}={d_{G[V_i]}[v]\over d_G[v]}=q^i_G(v)$, $i=1,2$. Thus we have $min(q^i_{G\square H}(v')) > min(q^i_G(v))$.
\end{proof}

Since the cartesian product of graphs is commutative taking a partition yielding to $q(G)$ or $q(H)$, we obtain the following:

\begin{proposition} \label{prodmax}
  For every pair of graphs $G$ and $H$,
  \begin{center}
    $q(G\square H)> max(q(G), q(H))$
  \end{center}
\end{proposition}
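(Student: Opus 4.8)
The plan is short: the proposition is essentially an immediate consequence of Lemma~\ref{prodpart} together with the commutativity of $\square$, so the whole argument is a two-step packaging.

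First I would fix an \emph{optimal} partition $(V_1,V_2)$ of $G$, i.e.\ one with $\min_{v\in V(G)}\{q^i_G(v)\}=q(G)$; such a partition exists because $G$ is finite with at least two vertices, so ${\cal P}_G$ is a finite nonempty set and the maximum defining $q(G)$ is attained. Lift this partition to $G\square H$ by putting, for each $v\in V_i$, the whole $H$-fiber $H_v$ into $V_i'$; since $V_1,V_2\neq\emptyset$ and $H$ has a vertex, $(V_1',V_2')$ is a genuine nontrivial partition of $G\square H$, of exactly the form required by Lemma~\ref{prodpart}. That lemma then gives
\[
\min_{v'\in V(G\square H)} q^i_{G\square H}(v') \;>\; \min_{v\in V(G)} q^i_G(v) \;=\; q(G).
\]
Since $q(G\square H)$ is by definition the maximum of $\min_{v'} q^i_{G\square H}(v')$ over \emph{all} partitions of $G\square H$, in particular $q(G\square H)\ge \min_{v'} q^i_{G\square H}(v') > q(G)$.

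Next I would use that the cartesian product is commutative: $G\square H$ and $H\square G$ are isomorphic graphs, hence $q(G\square H)=q(H\square G)$. Repeating the previous paragraph with the roles of $G$ and $H$ interchanged (fixing an optimal partition of $H$ and lifting it along the $G$-fibers) yields $q(H\square G)>q(H)$, and therefore $q(G\square H)>q(H)$. Combining the two strict inequalities gives $q(G\square H)>\max(q(G),q(H))$, as claimed.

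I do not expect any real obstacle here, since all the content is carried by Lemma~\ref{prodpart}. The only points deserving a word of care are that the maximum in the definition of $q$ is attained (immediate from finiteness of ${\cal P}_G$) and that the lifted partition is nontrivial (immediate since the $V_i$ are nonempty); the strictness of the inequality is inherited verbatim from the lemma, whose gain stems from the fact that every $H$-fiber $H_v$ contributes at least one extra in-part neighbour to each of its vertices ($H$ being connected on at least two vertices).
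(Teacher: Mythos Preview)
Your argument is correct and matches the paper's own justification: the proposition is stated as an immediate consequence of Lemma~\ref{prodpart} applied to an optimal partition of $G$ (respectively of $H$) together with the commutativity of the cartesian product. You have simply spelled out the details (attainment of the maximum, nontriviality of the lifted partition) that the paper leaves implicit.
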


\begin{proposition}\label{prodlowbound}
  For every pair of graphs $G$ and $H$,
  \begin{center}
    $q(G\square H)> {1\over 2}$
  \end{center}
\end{proposition}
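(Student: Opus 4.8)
The plan is to separate two cases according to whether one of the two factors has a vertex of degree one, and in the remaining (leaf-free) case to lift a Stiebitz partition of one factor along the fibres over the other.

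First I would dispose of the case where $G$ or $H$ has a leaf. As already noted inside the proof of Proposition \ref{C3}, a connected graph with a vertex of degree one has degree ratio at least ${1\over 2}$ (isolate the leaf: it keeps ratio ${1\over 2}$, its unique neighbour keeps ratio ${d\over d+1}\ge{1\over 2}$, and every other vertex keeps ratio $1$). Hence $\max(q(G),q(H))\ge{1\over 2}$, and Proposition \ref{prodmax} immediately gives $q(G\square H)>\max(q(G),q(H))\ge{1\over 2}$. So from now on I may assume $\delta(G)\ge 2$ and $\delta(H)\ge 2$; in fact only $\delta(H)\ge 2$ will be used.

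In that case I would argue as in the proof of Proposition \ref{lowbound}. Apply Theorem \ref{Stiebitz} to $G$ with $f_1(v)=f_2(v)=\lfloor{1\over 2}(d_G(v)-1)\rfloor$; since $d_G(v)\ge f_1(v)+f_2(v)+1$ for every $v$, this yields a partition $(V_1,V_2)$ of $G$ with $d_{G[V_i]}(v)\ge\lfloor{1\over 2}(d_G(v)-1)\rfloor$ for each $v\in V_i$. Lift it along the $H$-fibres, i.e.\ set $V_i'=\{(g,h):g\in V_i,\ h\in V(H)\}$, which is a (non-trivial) partition of $G\square H$. For $(g,h)\in V_i'$, every $H$-direction neighbour of $(g,h)$ lies in the fibre $H_g\subseteq V_i'$, while its $G$-direction neighbours in $V_i'$ are exactly those over $N_G(g)\cap V_i$, so, exactly as in the proof of Lemma \ref{prodpart}, $q^i_{G\square H}((g,h))={d_{G[V_i]}(g)+d_H(h)+1\over d_G(g)+d_H(h)+1}$. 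It then remains the routine check that this exceeds ${1\over 2}$, i.e.\ that $2d_{G[V_i]}(g)+d_H(h)+1>d_G(g)$; since $2\lfloor{1\over 2}(d_G(g)-1)\rfloor\ge d_G(g)-2$, this reduces to $d_H(h)>1$, which holds because $\delta(H)\ge 2$. Hence every vertex of $G\square H$ has ratio $>{1\over 2}$ under $(V_1',V_2')$, so $q(G\square H)>{1\over 2}$.

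The only real decision in the argument is the initial case split: once one notices that a factor with a leaf already forces the conclusion through Propositions \ref{C3} and \ref{prodmax}, the leaf-free case is essentially mechanical, the fibre-lift of a Stiebitz partition of $G$ automatically picking up from $H$ the two extra closed neighbours it needs. The only point of care is to invoke Theorem \ref{Stiebitz} so that it returns a genuine partition of $G$ (both parts non-empty), exactly as this is done in the earlier proofs, so that its fibre-lift is again a genuine partition of $G\square H$.
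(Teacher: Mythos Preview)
Your proof is correct and follows essentially the same route as the paper's: apply Stiebitz's theorem to one factor, lift the resulting partition along the $H$-fibres, and use $\delta(H)\ge 2$ to push every ratio strictly above ${1\over 2}$. The only cosmetic difference is that you handle the leaf case up front via Proposition~\ref{prodmax}, whereas the paper assumes $q(G),q(H)<{1\over 2}$ from that proposition and derives the absence of leaves as a contradiction at the end.
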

\begin{proof}
  We use Theorem \ref{Stiebitz}. We define $f_i(v)=\lfloor {1\over 2}d(v)-1 \rfloor$, $i=1,2$. Hence $d(v)\ge f_1(v) + f_2(v) + 1$ and there exists a partition $(V_1, V_2)$ of $G$ such that $d_{G[V_i]}\ge f_i(v)$, $i=1,2$. By Proposition \ref{prodmax} we can assume $q(G),q(H)<{1\over 2}$, thus there exists a vertex $v$ of $G$ such that $q^i_G(v)<{1\over 2}$. Note that $q^i_G(v)<{1\over 2}$ only if $d_G(v)=2p, p\ge1$.

  Let $G'=G\square H$. We define a partition $(V_1',V_2')$ of $G'$ as followed: $V_1'=\{v' \mid v'\in H_v, v\in V_1\}$ and $V_2'=\{v' \mid v'\in H_v, v\in V_2\}$. Let $v$ be a vertex of $G$ such that $d_G(v)=2p$ and $q^i_G(v)={p\over 2p+1}<{1\over 2}$. For the corresponding $H$-fiber $H_v$ we have $d_{G'\setminus H_v}(v')=d_G(v)$, $d_{H_v}(v')\ge 1$, $H_v\subset V_i$, $i=1,2$ for every vertices $v'$ of $H_v$. Hence we have $q^i_{G'}(v')={p+d_{H_v}(v')\over 2p+d_{H_v}(v')+1}\ge {1\over 2}$ for every vertices $v'$ of $H_v$ with equality only when $d_{H_v}(v')=1$. But if $d_{H_v}(v')=1$ then $H$ has a leaf and hence $q(H)\ge {1\over 2}$. Thus by Proposition \ref{prodmax} we obtain $q(G\square H)> {1\over 2}$.\end{proof}

\begin{proposition}\label{prodkregtree}
  If $G$ is a $k$-regular graph and $H$ is a tree then,
  \begin{center}
    $q(G\square H)=\max_{uv\in E(H)}\min\{ {d(u)+k\over d[u]+k},{d(v)+k\over d[v]+k}\}$
  \end{center}
\end{proposition}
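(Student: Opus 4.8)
Write $M$ for the right‑hand side of the claimed equality. The plan is to prove $q(G\square H)\ge M$ by exhibiting one partition and $q(G\square H)\le M$ by a short case analysis; the first inequality is the exact analogue of Corollary \ref{tree}. For the lower bound I would fix an edge $u_0v_0\in E(H)$ attaining $M$, say with $d_H(u_0)\le d_H(v_0)$, and use that deleting $u_0v_0$ splits the tree $H$ into two subtrees $H^{(1)}\ni u_0$ and $H^{(2)}\ni v_0$; set $V_i'=V(G)\times V(H^{(i)})$, $i=1,2$. In $G\square H$ a vertex $(g,w)$ has degree $k+d_H(w)$, all $k$ of its $G$-neighbours stay on its side, and all its $H$-neighbours stay on its side except for the single edge joining $(g,u_0)$ to $(g,v_0)$. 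Hence $q^i=1$ everywhere except at the vertices $(g,u_0)$ and $(g,v_0)$, where it equals ${k+d_H(u_0)\over k+d_H[u_0]}=M$ and ${k+d_H(v_0)\over k+d_H[v_0]}\ge M$ respectively, so $q(G\square H)\ge M$.

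For the upper bound, let $(V_1',V_2')$ be an arbitrary partition; I must find a vertex of ratio at most $M$. I would first record that ${k+1\over k+2}\le M$: apply the definition of $M$ to an edge of $H$ incident with a leaf, using that $x\mapsto{x+k\over x+1+k}$ is increasing. Then split into two cases. Case A: some crossing edge of the partition lies inside an $H$-fiber $H_g$. Identifying $H_g$ with $H$, this gives an edge $uv\in E(H)$ with, say, $(g,u)\in V_1'$ and $(g,v)\in V_2'$. Since $(g,u)$ then has a neighbour across the cut, $q^1((g,u))\le{k+d_H(u)\over k+d_H[u]}$, and likewise $q^2((g,v))\le{k+d_H(v)\over k+d_H[v]}$; therefore $\min_{v'}q^i(v')\le\min\{{k+d_H(u)\over k+d_H[u]},{k+d_H(v)\over k+d_H[v]}\}\le M$ by the very definition of $M$.

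Case B: no crossing edge lies inside an $H$-fiber, i.e.\ every crossing edge is a $G$-type edge. Then each $H$-fiber, being connected, lies entirely in $V_1'$ or entirely in $V_2'$, so the partition descends to a partition $(A',B')$ of $V(G)$ into two non-empty parts. As $G$ is connected there is an edge $g_1g_2\in E(G)$ with $g_1\in A'$ and $g_2\in B'$; picking a leaf $w_0$ of $H$, the vertex $(g_1,w_0)$ has degree $k+1$ and its $G$-neighbour $(g_2,w_0)$ lies across the cut, so $q^1((g_1,w_0))\le{k+1\over k+2}\le M$. In both cases $\min_{v'}q^i(v')\le M$, hence $q(G\square H)\le M$, which together with the lower bound finishes the proof.

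The delicate point is the upper bound. Lemma \ref{upbound} applied directly to $G\square H$ is too weak, because a $G$-type edge of $G\square H$ joins two vertices of degree $k+d_H(w)$ and so only yields the bound ${\Delta(H)+k\over\Delta(H)+1+k}$, which in general exceeds $M$ (for instance when $H$ is a star). The substance of the argument is therefore the observation that any cut of $G\square H$ can be traced either to an $H$-type edge — where Lemma \ref{upbound} applied inside a single $H$-fiber already does the job — or, failing that, to a leaf-fiber vertex whose ratio is at most ${k+1\over k+2}\le M$. I expect the bookkeeping of these two cases to be where the real work lies; the lower bound is routine.
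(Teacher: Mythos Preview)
Your argument is correct, and your lower bound is exactly the paper's. The interesting point is the upper bound: the paper disposes of it in one line, writing ``By Lemma \ref{upbound} we have $q(G\square H)\le \max_{uv\in E(H)}\min\{ {d(u)+k\over d[u]+k},{d(v)+k\over d[v]+k}\}$.'' But, as you yourself observe in your final paragraph, Lemma \ref{upbound} applied to $G\square H$ maximises over \emph{all} edges of $G\square H$, and a $G$-type edge through a vertex $(g,h)$ with $d_H(h)=\Delta(H)$ contributes ${k+\Delta(H)\over k+\Delta(H)+1}$, which can strictly exceed $M$ (take $H$ a star). So the paper's one-line upper bound is a genuine gap, and your Case A / Case B analysis is precisely what is needed to repair it: either the cut meets some $H$-fiber, in which case an $H$-type crossing edge bounds the ratio by $M$ directly, or every $H$-fiber lies on one side and a leaf of $H$ forces a vertex of ratio at most ${k+1\over k+2}\le M$. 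Your proof is therefore not just a different route but a completion of the paper's argument.
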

\begin{proof}
By Lemma \ref{upbound} we have $q(G\square H)\le \max_{uv\in E(H)}\min\{ {d(u)+k\over d[u]+k},{d(v)+k\over d[v]+k}\}$. Let $uv\in E(H)$ such that $\min\{ {d(u)\over d[u]},{d(v)\over d[v]}\}$ is maximum. The tree $H$ without $uv$ consists of two connected components $C_u,C_v$. Let $(V_1,V_2)$ be the partition of $G\square H$ such that $V_1=\cup_{w\in C_u}\{G_w\}$ and $V_2=\cup_{w\in C_v}\{G_w\}$ where $G_w$ are the $G$-fibers induced by the vertices $w$ of $H$. Let $M=\{st\in E(G\square H): s\in V_1,t\in V_2\}$. Since $uv$ is a disconnecting edge in $H$ then $M$ is a matching-cut of $G\square H$. Thus $(V_1,V_2)$ garanties that $q(G\square H)\ge\max_{uv\in E(H)}\min\{ {d(u)+k\over d[u]+k},{d(v)+k\over d[v]+k}\}$.
\end{proof}

\begin{proposition}\label{bipK2}
Let $G$ be a connected bipartite graph with $\Delta(G)=5$. Deciding if $q(G\square K_2)\ge{5\over 6}$ is NP-complete.
\end{proposition}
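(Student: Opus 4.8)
The plan is to reduce from the matching-cut problem in a well-chosen family of graphs whose hardness has already been established in the excerpt. The key structural observation is the one used throughout Section~\ref{regul}: for a graph $G'$ with all vertices of degree at most $5$, a partition $(V_1,V_2)$ satisfies $q(G')\ge \frac56$ if and only if every vertex has at most one neighbor across the cut, i.e. if and only if the cut is a matching-cut. Now in $G\square K_2$, if $\Delta(G)=5$ then the two $G$-fibers each contain vertices of degree up to $6$, so a naive reduction does not immediately give the $\frac56$ threshold. Instead I would argue as follows: first establish that $q(G\square K_2)\ge \frac56$ forces the partition to respect the $K_2$-fibers in a controlled way — more precisely, either an entire $G$-fiber is on one side (in which case, by Lemma~\ref{prodmatchcut}, a matching-cut of $G\square K_2$ that is non-trivial on the $K_2$-fibers corresponds to a matching-cut of $K_2$, impossible) — or the cut genuinely separates $G$-fibers and its restriction yields a matching-cut of $G$. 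So the crux is to show $q(G\square K_2)\ge\frac56 \iff G$ has a matching-cut, for $G$ in the target class.

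First I would fix the source problem. By Proposition~\ref{bipNPc} and Lemma~\ref{kregbipcut} we already have NP-completeness of matching-cut for several bipartite families; the cleanest choice here is the bipartite class used in Proposition~\ref{bipNPc} (one side of degree $3$, the other of degree $4$), or more conveniently a bipartite $G$ with $\Delta(G)\le 5$ for which matching-cut is NP-complete — this follows from Le and Randerath \cite{LeRanderath} (their bipartite instances have bounded degree). I would state that input as $G$ with bipartition $(X,Y)$, $3\le d_G(v)\le 5$. Then I form $G'=G\square K_2$; note $G'$ is bipartite and $\Delta(G')=6$.

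Next, the forward direction: if $G$ has a matching-cut, Lemma~\ref{prodmatchcut} gives a matching-cut of $G'$, and since a matching-cut partition puts at most one neighbor of each vertex across, we get $q^i_{G'}(v)\ge \frac{d_{G'}(v)}{d_{G'}[v]}\ge \frac{5}{6}$ for every $v$ (using $d_{G'}(v)\le 6$, so $\frac{d_{G'}(v)-1}{d_{G'}(v)+1}$; one checks $d_{G'}(v)\in\{4,5,6\}$ gives ratios $\frac35,\frac46,\frac57$ — wait, these are below $\frac56$). Here I must be careful: a matching-cut alone does not force $q\ge\frac56$ unless the degrees are all $5$. So the reduction must instead \emph{pad $G$ to be $5$-regular-ish} or, better, use the structure of $G\square K_2$: if $v\in G$ has $d_G(v)=5$ then its two copies in $G'$ have degree $6$, and a matching cut leaves $5$ neighbors inside, ratio $\frac67$; if $d_G(v)=4$ its copies have degree $5$, matching-cut ratio $\frac56$; if $d_G(v)=3$, degree $4$, ratio $\frac45<\frac56$. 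So the honest reduction requires the degree-$3$ vertices of $G$ to sit on one side of the $K_2$-fiber structure or to be absent. The fix, which I expect to be the main obstacle, is to start from a bipartite $G$ that is $4$-regular with matching-cut NP-complete — but Lemma~\ref{kregbipcut} gives exactly that ($k=4$). Then every vertex of $G'=G\square K_2$ has degree $5$, so $q(G')\ge\frac56 \iff G'$ has a matching-cut $\iff G$ has a matching-cut (Lemma~\ref{prodmatchcut}), and $\Delta(G')=5$. That settles the case $\Delta(G)=5$: take $G$ itself to be such a $G'$... but the statement fixes $G$ with $\Delta(G)=5$ as the \emph{input} to the product. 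So I would phrase the reduction as: given a $4$-regular bipartite $H$ (matching-cut NP-complete by Lemma~\ref{kregbipcut}), the graph $G:=H\square K_2$ is bipartite with $\Delta(G)=5$, and by Proposition~\ref{prodmax}/Lemma~\ref{prodmatchcut} one checks $q(G\square K_2)=q(H\square K_2\square K_2)\ge\frac56$ iff $H\square K_2$ has a matching-cut iff $H$ has a matching-cut. The routine verification is that for the $5$-regular-plus-degree-$6$ graph $G\square K_2$, ratio $\ge\frac56$ at every vertex is equivalent to the matching-cut condition, which I would check degree-by-degree; membership in NP is immediate and already noted after the definition of \textsc{Degree Ratio Partition}.
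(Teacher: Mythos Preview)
Your final reduction has a fatal gap: $K_2$ itself has a matching-cut (its single edge), so by Lemma~\ref{prodmatchcut} the product $G\square K_2$ has a matching-cut for \emph{every} graph $G$. In your construction $G=H\square K_2$ is $5$-regular, hence $G\square K_2$ is $6$-regular, and the matching-cut that simply separates the two $G$-fibers already gives every vertex ratio $6/7>5/6$. Consequently $q(G\square K_2)\ge 5/6$ holds unconditionally, regardless of whether the original $4$-regular bipartite $H$ has a matching-cut, and your reduction carries no information. The same issue breaks the intermediate step ``$H\square K_2$ has a matching-cut iff $H$ has a matching-cut'': the left-hand side is always true.

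The paper's proof avoids this by making the input graph deliberately \emph{non}-regular. Starting from a Le--Randerath bipartite instance $\tilde G$ (one side of degree $3$, the other of degree $4$), it attaches a pendant twin $v'$ to every vertex $v$, obtaining a bipartite $G'$ with $\Delta(G')=5$ but $\delta(G')=1$. In $G'\square K_2$ each twin copy has degree $2$; for such a vertex the ratio $\ge 5/6$ forces both of its neighbours (its $K_2$-mate and the copy of $v$ in its own $G'$-fiber) to lie on its side. This pins every $K_2$-fiber into a single part, so the ``free'' matching-cut coming from $K_2$ is disqualified, and any partition achieving $q\ge 5/6$ must descend to a genuine matching-cut of $\tilde G$. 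The pendant gadget is exactly the forcing mechanism your regular construction is missing.
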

\begin{proof}
By Le and Randerath \cite{LeRanderath} we know the following: deciding if $G$ a bipartite graph such that all the vertices of one side of the bipartition have degree three and all the vertices of other side have degree four, has a matching-cut is NP-complete. From $G$ we obtain $G'$ another bipartite graph by pairing every vertex $v$  to a new vertex $v'$, its {\it   twin}, $v'$ being linked only to $v$ in $G'$. Thus $\Delta(G')=5$. Let $H=G'\square K_2$. Clearly $H$ is obtained in polynomial time from $G$. If $G$ has a matching-cut then $G$ has a partition $(V_1,V_2)$ where for any $v\in V_i,i=1,2$ at most one of its neighbor is in $V_j,j\ne i$. From $(V_1,V_2)$ we obtain a partition $(V_1',V_2')$ of $H$ as follows: first, for each vertex $v\in V_i$ its twin $v'$ is put to $V_i$, second,  $V_i'=\{w\mid w\in K_2{\rm \ fiber\  of\ }v, v\in V_i\},i=1,2$. Hence $q(H)\ge{5\over 6}$.

Conversely, assume that $q(H)\ge {5\over 6}$. Let $(V_1',V_2')$ be a partition of $H$ achieving $q(H)\ge {5\over 6}$. Since for each twin  $v'$ we have $d_H(v')=2$, then $v'$ is in the same side of the partition as its two neighbors (so its two neighbors are in the same side).
Hence  for any  $v\in V'_i,i=1,2$ at most one of its neighbor is in $V_j,j\ne i$. Hence $G$ has a matching-cut.
\end{proof}

\begin{proposition}\label{prodbibregfix}
 Let $k$ be an integer $k\ge 4$ and $H$ be a fixed $k'$-regular graph without matching-cut. Deciding if $q(G\square H)\ge {k+k'\over k+k'+1}$ where $G$ is bipartite $k$-regular is NP-complete.
\end{proposition}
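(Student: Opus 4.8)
The plan is to reduce from the matching-cut problem on connected $k$-regular bipartite graphs, which Lemma~\ref{kregbipcut} shows is NP-complete for every fixed $k\ge 4$. First I would record the structure of the product: since $G$ and $H$ are connected, $G\square H$ is connected, and since every vertex $(g,h)$ of $G\square H$ has degree $d_G(g)+d_H(h)=k+k'$, the product is a connected $(k+k')$-regular graph. By the observation at the start of Section~\ref{regul}, this means $q(G\square H)\ge\frac{k+k'}{k+k'+1}$ if and only if $G\square H$ has a matching-cut.

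Next I would apply Lemma~\ref{prodmatchcut}: $G\square H$ has a matching-cut if and only if $G$ or $H$ has one. Since $H$ is a fixed $k'$-regular graph with no matching-cut (such graphs exist, for instance $K_{k'+1}$ for $k'\ge 2$, or $K_{3,3}$), this collapses to the statement that $q(G\square H)\ge\frac{k+k'}{k+k'+1}$ if and only if $G$ has a matching-cut.

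The reduction then maps a connected $k$-regular bipartite graph $G$ to the Degree Ratio Partition instance $\big(G\square H,\ q=\frac{k+k'}{k+k'+1}\big)$; as $H$ is fixed, $|V(G\square H)|=|V(G)|\cdot|V(H)|$ and the product is computable in polynomial time, and by the two equivalences above this is a correct many-one reduction. Membership in NP is immediate because Degree Ratio Partition is in NP. I expect no genuine obstacle here; the only points that need care are verifying that $G\square H$ is connected and $(k+k')$-regular, so that the matching-cut characterisation of the extremal degree ratio from Section~\ref{regul} applies verbatim, and noting that for each relevant $k'$ at least one fixed $k'$-regular matching-cut-free graph $H$ is available to be substituted.
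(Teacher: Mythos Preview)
Your proposal is correct and uses the same reduction as the paper, from the matching-cut problem on connected $k$-regular bipartite graphs via Lemma~\ref{kregbipcut}. The difference is only in how the correctness of the reduction is argued. The paper verifies both directions by hand: from a matching-cut of $G$ it builds a partition of $G\square H$ by placing each $H$-fiber on the side of its base vertex, and conversely, from a partition of $G\square H$ with ratio at least $\frac{k+k'}{k+k'+1}$ it observes that no $H$-fiber can be split (else $H$ would have a matching-cut) and reads off a matching-cut of $G$. You instead compress both directions into one line by combining the observation at the start of Section~\ref{regul} (for a connected regular graph, attaining the extremal ratio is equivalent to having a matching-cut) with Lemma~\ref{prodmatchcut}. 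Your packaging is cleaner and makes better use of the lemmas already proved in the paper; the paper's version is more self-contained but essentially re-derives those facts in context.
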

\begin{proof}
By Lemma \ref{kregbipcut} we know that for any integer $k\ge 4$ deciding if a bipartite $k$-regular graph $\tilde G$ has a matching-cut is NP-complete. Given $(V_1,V_2)$  a partition of $\tilde G$ inducing a matching-cut, we construct a partition $(V_1',V_2')$ of $G'=G\square H$ as follows: for every vertex $v\in V_i$, we put the according $H$-fiber $H_v$ in $V_i'$, $i=1,2$. Clearly, for each vertex $w\in V_i'$, $q^i_{G'}(w)\ge {k+k'\over k+k'+1}$.

Conversely let $(V_1',V_2')$ be a partition of $G'=G\square H$ achieving $q(G')\ge {k+k'\over k+k'+1}$. Since $H$ has no matching-cut each fiber $H_v$ is such that $H_v\subset V_i'$. Then we construct a partition $(V_1,V_2)$ of $\tilde G$ as follows: for every vertex $v\in V(G)$, if $H_v\subset V_i'$, we put $v$ in $V_i$. Clearly $(V_1,V_2)$ satisfies $q({\tilde G})\ge {k\over k+1}$.\end{proof}

Note that for any $k$-regular graph $G$ and any graph $H$ such that $H$ has a matching-cut we have $q(G\square H)\ge {\Delta(G\square H)\over \Delta(G\square H)+1}$. From a matching-cut $M$ of $H$ it suffices to take the partition $(V'_1,V'_2)$ of $G\square H$ where each $H_v$ fiber is partitioned according to $M$.

\begin{proposition}\label{prodcub}
 Let $G$ and $H$ be two connected cubic graphs.  Then if $G,H\in\{K_4,K_{3,3}\}$ then $q(G\square H)={5\over 7}$ else
 $q(G\square H)={6\over 7}$.
\end{proposition}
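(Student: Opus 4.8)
The plan is to handle the two regimes separately. Since $G\square H$ is a connected $6$-regular graph, Lemma \ref{upbound} gives the universal upper bound $q(G\square H)\le\frac{6}{7}$, and this value is attained precisely when $G\square H$ has a matching-cut. By Lemma \ref{prodmatchcut}, $G\square H$ has a matching-cut if and only if $G$ or $H$ has one. Since the only connected cubic graphs without a matching-cut are $K_4$ and $K_{3,3}$ (the remark preceding Proposition \ref{cubic}), we obtain immediately: if at least one of $G,H$ is outside $\{K_4,K_{3,3}\}$, then $G\square H$ has a matching-cut, hence $q(G\square H)=\frac{6}{7}$. This settles the ``else'' case.

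It remains to treat $G,H\in\{K_4,K_{3,3}\}$, where neither factor has a matching-cut, so by Lemma \ref{prodmatchcut} $G\square H$ has no matching-cut and therefore $q(G\square H)<\frac{6}{7}$; since all degrees equal $6$, the closed-neighborhood ratios lie in $\{\frac{1}{7},\ldots,\frac{6}{7}\}$, so $q(G\square H)\le\frac{5}{7}$. For the matching lower bound $q(G\square H)\ge\frac{5}{7}$ I would exhibit an explicit partition $(V_1,V_2)$ in which every vertex keeps at least $4$ of its $6$ neighbors on its own side, i.e. $|N(w)\cap V_i|\ge 4$ for every $w\in V_i$. The natural construction: pick a nontrivial partition $(A,B)$ of one factor, say $G$, into two parts and let $V_i$ be the union of the $H$-fibers over the vertices of the corresponding part; then a fiber vertex $w$ sitting over $g$ has $d_H(w)=3$ neighbors inside its own fiber (all on its side) plus $|N_G(g)\cap A|$ (or $\cap B$) cross-fiber neighbors on its side, so we need $g$ to retain at least one $G$-neighbor on its own side. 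In $K_4$, taking $A,B$ to be the two pairs of a perfect matching gives each vertex exactly one same-side neighbor; in $K_{3,3}$, splitting into two sides each consisting of one vertex from one part and two from the other again leaves every vertex with at least one same-side neighbor. Either choice yields $|N(w)\cap V_i|\ge 3+1=4$ for all $w$, hence $q^i(w)\ge\frac{5}{7}$, as required. (One may alternatively invoke Theorem \ref{Stiebitz} with $f_1=f_2=2$ to produce such a partition abstractly, but the explicit fiber construction is cleaner here.)

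The only real subtlety is making sure the lower-bound partition genuinely gives $\ge 4$ same-side neighbors for \emph{every} vertex, including those whose $G$-image lies on the ``sparse'' side of the chosen partition of the small factor --- but since $K_4$ and $K_{3,3}$ are vertex-transitive with enough edges, any balanced bipartition of the factor leaves each vertex with at least one neighbor on its own side, so there is no obstruction. Combining the two regimes gives the stated dichotomy, completing the proof. $\Box$
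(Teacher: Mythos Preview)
Your proof is correct and follows essentially the same route as the paper: use Lemma \ref{prodmatchcut} together with the classification of cubic graphs without matching-cuts for the $\tfrac{6}{7}$ case, and for the $\tfrac{5}{7}$ lower bound partition one factor into two sets in which every vertex keeps at least one neighbor, then lift to $H$-fibers (the paper does this case by case, you do it uniformly). One caveat: your closing remark that ``any balanced bipartition of the factor leaves each vertex with at least one neighbor on its own side'' is false for $K_{3,3}$ (the bipartition into colour classes is balanced and leaves every vertex with zero same-side neighbours), but this does not harm the argument since your earlier explicit constructions already do the job.
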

\begin{proof}
Assume without loss of generality that $G\not\in\{ K_4,K_{3,3}\}$. Hence $G$ has a matching-cut and from the remark just above $q(G\square H)={6\over 7}$. Now $G,H\in\{ K_4,K_{3,3}\}$. By Lemma \ref{prodmatchcut} $G\square H$ has no matching-cut. Thus $q(G\square H)\le{5\over 7}$. Let $G=H=K_4$. Let $V(G)=\{a,b,c,d\}$ and $V(H)=\{1,2,3,4\}$. Taking the partition $(V_1,V_2)$ of $G\square H$ where $V_1=\{(a,i),(b,i) : 1\le i\le 4\}$ and $V_2=\{(c,i),(d,i) : 1\le i\le 4\}$ we have $q(G\square H)\ge{5\over 7}$. Let $G=H=K_{3,3}$. Let $V(G)=\{a,b,c,d,e,f\}$ with $ab\in E(G)$ and $V(H)=\{1,2,3,4,5,6\}$ with $E(H[\{1,2,3\}])=\emptyset$. Taking the partition $(V_1,V_2)$ of $G\square H$ where $V_1=\{(a,i),(b,i) : 1\le i\le 6\}$ and $V_2=V(G\square H)\setminus V_1$ we have $q(G\square H)\ge{5\over 7}$. Eventually, let $G=K_4$ with  $V(G)=\{a,b,c,d\}$ and $H=K_{3,3}$ with $V(H)=\{1,2,3,4,5,6\}$ and $E(H[\{1,2,3\}])=\emptyset$. Taking the partition $(V_1,V_2)$ of $G\square H$ where $V_1=\{(a,i),(b,i) : 1\le i\le 6\}$ and $V_2=V(G\square H)\setminus V_1$ we have $q(G\square H)\ge{5\over 7}$. Hence $q(K_4\square K_4)=q(K_{3,3}\square K_{3,3})=q(K_4\square K_{3,3})={5\over 7}$.
\end{proof}

\section{Conclusion}\label{conc}
For a graph $G$ we defined $q(G)$, a graph parameter, its maximum degree ratio. This parameter is a measure of the local quality of a partition of $G$. We proved several bounds on $q(G)$, we characterized the graphs achieving some particular values, $q(G)={1\over 3}$ or $q(G)={2\over 5}$, for instance. We proved several NP-completeness results for the case of regular graphs. We also proved lower bounds and complexity results when the graph $G$ is the cartesian product of two graphs.\\

Some open questions are left:

\begin{itemize}
\item Can we drop the condition $\Delta(G)\le 6$ in the Proposition \ref{2/5D6};
\item Characterize the graphs with $q(G)={3\over 7}$ (or other fixed values $q(G)={4\over 9}, q(G)={5\over 11},\ldots$);
\item Determine the complexity of Degree Ratio Partition for the following classes: Complement of bipartite, Split, Cograph.
\end{itemize}

The maximum degree ratio concerns a $2$-partition of a graph. A further direction of research could concern $k$-partitions, $k\ge 3$. We can also consider partitions with additional constraints:  each set must have the same cardinality, each set  must be connected.

\begin{ack}
The authors express their deep thanks to St\'ephane Rovedakis for helpful discussions.\end{ack}

\end{document}